\newtheorem{thm}{Theorem}[section]
\newtheorem{lem}[thm]{Lemma}
\newtheorem{prop}[thm]{Proposition}
\theoremstyle{definition}
\theoremstyle{remark}
\newtheorem{rem}[thm]{Remark}
\numberwithin{equation}{section}
\newcommand{\Real}{{\bf R}}
\newcommand{\eps}{\varepsilon}
\def\R{{\bf R}}
\def\N{{\bf N}}
\def\S{{\bf S}}
\def\d{\displaystyle}
\def\e{{\varepsilon}}
\def\vp{\varphi}
\begin{document}

\title[]
{Blow-up of solutions to critical semilinear wave equations with variable coefficients}
\author{Kyouhei Wakasa and Borislav Yordanov}

\address{Department of Mathematics, Faculty of Science and Technology, 
Tokyo University of Science, 2641 Yamazaki, Noda-shi, Chiba, 278-8510, Japan}
\address{Office of International Affairs, Hokkaido University, 
Kita 15, Nishi 8, Kita-ku, Sapporo, Hokkaido 060-0815, Japan 
and Institute of Mathematics, Sofia}
\vskip10pt
\address{}
\email{wakasa\_kyouhei@ma.noda.tus.ac.jp}
\email{byordanov@oia.hokudai.ac.jp}



\date{\today}
\subjclass{} \keywords{}

\begin{abstract}

We verify the critical case $p=p_0(n)$ of Strauss' conjecture~\cite{St81} concerning the blow-up of solutions to
semilinear wave equations with variable coefficients in $\Real^n$, where $n\geq 2$.
The perturbations of Laplace operator are assumed to be smooth and decay exponentially fast at infinity.
We also obtain a sharp lifespan upper bound for solutions with compactly supported data when $p=p_0(n)$.
The unified approach to blow-up problems in all dimensions combines several classical ideas in order to generalize and simplify
the method of Zhou~\cite{Z07} and Zhou $\&$ Han~\cite{ZH14}:
exponential ``eigenfunctions" of the Laplacian~\cite{YZ06} are used to construct the test function $\phi_q$
for linear wave equation with variable coefficients and John's method of iterations~\cite{J79} is augmented with the \lq\lq slicing method" of Agemi, Kurokawa and Takamura \cite{AKT00} for lower bounds in the critical case.
\end{abstract}
\maketitle


\section{Introduction}

We study the blow-up part of Strauss' conjecture~\cite{St81} in the case of semilinar wave equations with critical
nonlinearities and metric perturbations of the Laplacian
\[
\Delta_g =\sum_{i,j=1}^n\partial_{x_i}g_{ij}(x)\partial_{x_j},
\]
where $g=(g_{ij})\in C^\infty(\Real^n)$ satisfies the following: there exist $\gamma>0$ and $\beta>0$,
\begin{eqnarray}
\label{g}
\sum_{i,j=1}^n g_{ij}(x)\xi_i\xi_j & \geq &\gamma |\xi|^2,\quad         \xi\in\Real^n,\\
\label{g1}
\sum_{i,j=1}^n |\nabla g_{ij}(x)|+|g_{ij}(x)-\delta_{ij}| & = & O(e^{-\beta|x|}),\quad   |x|\rightarrow\infty.
\end{eqnarray}
The problem is to determine what range of $p>1$ allows some solutions of
\begin{equation}
\label{ivp}
\left\{
  \begin{array}{ll}
     u_{tt}-\Delta_g u=|u|^{p}, & x\in \Real^n,\quad t>0, \\
      (u,u_t)|_{t=0}=(\eps u_0,\eps u_1), &  x\in \Real^n,
  \end{array}
\right.
\end{equation}
with $(u_0,\: u_1)\in C_0^{\infty}(\R^n)\times C_0^{\infty}(\R^n)$, to blow up in finite time regardless of
any smallness condition on $\e>0$. It is also interesting to estimate the lifespan of such solutions as
$\eps\rightarrow 0,$ in order to verify the sharpness of results on almost global existence
obtained by other methods~\cite{W}. The history of these problems spans almost four decades
beginning with the work of Fritz John~\cite{J79} in 1979.



When $g_{ij}(x)=\delta_{ij}$, the original conjecture of Walter Strauss~\cite{St81} reads as follows:
there exists a critical exponent $p_0(n)$, such that (\ref{ivp}) has a
global in time solution for sufficiently small $\eps>0$ if $p>p_0(n)$ and
(\ref{ivp}) has a solution that blows up in finite time for every $\eps>0$ if $1<p<p_0(n)$.
Actually, the Strauss' exponent $p_0(n)$ is defined as the positive root of the quadratic equation
$\gamma(p,n)=0$, where
\begin{equation}
\label{Strauss-eq}
\gamma(p,n)=2+(n+1)p-(n-1)p^2.
\end{equation}
This conjecture was first verified by John~\cite{J79} when $n=3$, except for $p=p_0(3)$.
Later, Glassey \cite{G81a}, \cite{G81b} established the conjecture when $n=2$, excluding again $p=p_0(2)$.
The critical cases $p=p_0(n)$ in $n=2$ and $3$ dimensions were shown by Schaeffer~\cite{Sc85}
to belong to the blow-up range.
In higher space dimensions $n\ge4$, Sideris \cite{Si84} verified the blow-up part for subcritical
$1<p<p_0(n)$. The proof was simplified by Rammaha~\cite{R88} and Jiao $\&$ Zhou~\cite{JZ}.
The global existence in the supercritical case $p>p_0(n)$ was proved by
Kubo~\cite{Kubo96} (radial case, odd dimensions),
Kubo $\& $ Kubota~\cite{KK98} (radial case, even dimensions), Zhou~\cite{Z95} ($n=4$) and finally Georgiev $\&$ Lindblad $\&$ Sogge~\cite{GLS97} (general case). Tataru~\cite{Ta} gave a simpler proof which applies to
$p>p_0(n)$ and all $n\geq 2.$
The critical cases $p=p_0(n)$ in $n\geq 4$ dimensions were
included in the blow-up range by Yordanov $\&$ Zhang~\cite{YZ06} and Zhou~\cite{Z07},
independently. An earlier result of Kato~\cite{Kato80} showed the blow-up when $n=1$ and $p>1$, so
the Strauss conjecture was completely settled in the case of constant coefficients by 2007.

An important problem remained open, however, which was to estimates the lifespan of solutions
when $1<p\leq p_0(n)$ and $\eps\rightarrow 0$.
Let us recall that the \lq\lq lifespan" $T_\e=T_\eps(u_0,u_1)$ is the supremum of all $T>0$,
such that a solution exists to problem (\ref{ivp}) satisfying $(u,u_t)\in C([0,T),H^1(\Real^n)\times L^2(\Real^n))$.
To state the known results when $g_{ij}(x)=\delta_{ij}$, we use the standard notation
$A\sim B$ meaning that there exist positive constants $c$ and $C$, independent of $\e$,
such that $cB\le A\le CB$ holds. It is expected that the exact lifespan estimates for small $\e$
are similar to
\begin{eqnarray*}
T_\e \sim \e^{-2p(p-1)/\gamma(p,n)} &\hbox{if}&  1<p<p_0(n) \hbox{ and }  n\ge3,\\
                                    &  \hbox{or} & 2<p<p_0(2) \hbox{ and } n=2;\\
T_\e \sim\exp(K\e^{-p(p-1)}) &\hbox{if} & p=p_0(n).
\end{eqnarray*}
For low dimensions $n=2$ and $3$, Zhou \cite{Z93}, \cite{Z92_three} and Lindblad \cite{L90}
obtained such results when $1<p<p_0(n)$.
Zhou \cite{Z93}, \cite{Z92_three} also studied the critical case $p=p_0(n)$.
For higher dimensions $n\ge4$, Lai $\&$ Zhou \cite{LZ14} established the lower bound of $T_\eps$
when $1<p<p_0(n)$. The critical case was studied by Lindblad $\&$ Sogge \cite{LS96} who showed
the lower bound of lifespan when $n\le 8$ or initial data are radially symmetric.
Upper bounds on $T_\eps $ were obtained by Takamura \cite{Ta15} in the subcritical case and
by Takamura $\&$ Wakasa \cite{TW11} in the critical case.
Later, Zhou $\&$ Han \cite{ZH14} gave an alternative proof of \cite{TW11}
which also applies to $n=2$ and $3$.

The Strauss' conjecture and lifespan estimates have recently been extended to semilinear wave equations
in other settings, including exterior domains, asymptotically Euclidean spaces, Schwarzschild and Kerr spacetimes.

Let us first review global existence results for the initial boundary value problem
in exterior domains, which require certain local energy decay or non-trapping boundaries.
For supercritical $p>p_0(n)$, Du $\&$ Metcalfe $\&$ Sogge $\&$ Zhou~\cite{DMSZ08} showed global existence when $n=4$ and Hidano $\&$ Metcalfe $\&$ Smith $\&$ Sogge $\&$ Zhou~\cite{HMSSZ10} generalized their result to $n=3$ and $4$ later.
Smith $\&$ Sogge $\&$ Wang \cite{SSW12} proved global existence in the two-dimensional case when $p>p_0(2)$. The blow-up part was verified by Zhou $\&$ Han \cite{ZH11}, together with the upper bound on $T_\eps$, when
$1<p<p_0(n)$ and $n\ge3$.
The critical case $p=p_0(3)$ in $n=3$ was obtained by Lai $\&$ Zhou \cite{LZ15}.
For two-dimensional exterior domains, blow-up results were obtained by
Li $\&$ Wang \cite{LW12}, when $1<p<p_0(2)$, and Lai $\&$ Zhou \cite{LZ17} when $p=p_0(2).$
Lai $\&$ Zhou also proved in \cite{LZ16} that $p=p_0(n)$ belongs to the blow-up range when $n\ge5$.
Concerning lower bounds on the lifespan $T_\eps$, these were studied by Yu \cite{Y11}, in the case $1<p<p_0(3)$,
and by Zha $\&$ Zhou \cite{ZZ15}, in the critical case $p=p_0(4)$.

Next, we turn to results for asymptotically Euclidean space, Schwarzschild and Kerr spacetimes.
The global existence in asymptotically Euclidean spaces was obtained by Wang $\&$ Yu~\cite{WY11}
and Metcalfe $\&$ Wang~\cite{MW} for $p>p_0(n)$ and $n=3,\ 4$.
Moreover, Wang~\cite{W} showed global existence when  $n\ge4$ and $p=2$ and derived sharp lifespan estimates
when $1<p\leq p_0(n)$ and $n=3,\ 4$. The blow-up result for Schwarzschild spacetime was obtained by Catania $\&$ Georgiev~\cite{CG06}
when $n=3$ and $1<p<p_0(3)$. For both Schwarzschild and Kerr spacetimes with small angular momentum,
Lindblad $\&$ Metcalfe $\&$ Sogge $\&$ Tohaneanu $\&$ Wang~\cite{LMSTW14}
showed global existence in the supercritical case $p>p_0(3)$.

This paper contributes to the blow-up part of Strauss' conjecture. We observe that the approach of \cite{Z07} and \cite{ZH14} works for problems in all dimensions and settings if the counterparts of their $\phi_q$ are available.
Here we construct such test functions, which are special solutions to the linear wave equation, using exponential ``eigenfunctions" of $\Delta_g$. Another improvement is the simple blow-up functional, which is just (\ref{Gq}) below.
Unfortunately, we derive a nonlinear integral inequality that is more difficult to study
than the nonlinear differential inequalities appearing in the approach of \cite{Kato80} and \cite{G81a}.
We need the iteration method of \cite{J79},
in its stronger form developed by \cite{AKT00}, to show finite time blow-up and derive sharp lifespan estimates.

\begin{thm}
\label{thm:main}
Let $n\ge2$ and $p=p_0(n).$
Assume that both $u_0\in H^1(\R^n)$ and $u_1\in L^2(\R^n)$ are nonnegative, do
not vanish identically and have supports in the ball $\{x\in\R^n:\ |x|\le R_0 \}$,
where $R_0>1$.

If (\ref{ivp}) has a solution $(u,u_t)\in C([0,T_\eps), H^1(\Real^n)\times L^2(\Real^n))$, such that
\begin{equation}
\label{support}
\mbox{\rm supp}(u,u_t) \subset \{(x,t)\in\R^n\times[0,T_\eps)\ :\ |x|\le t+R\},
\end{equation}
with $R\geq R_0$, then $T_\eps<\infty$. Moreover,
there exist constants $\e_0=\e_0(u_0,u_1,n,p,R)$ and $K=K(u_0,u_1,n,p,R)$, such that
\begin{equation}
\label{thm:lifespan}
T_\e\le\exp\left(K\e^{-p(p-1)}\right)\ \hbox{ for }\ 0<\e\le\e_0.
\end{equation}
\end{thm}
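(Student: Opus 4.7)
The plan is to adapt the test-function-plus-iteration scheme of \cite{Z07,ZH14} to the variable-coefficient operator $\Delta_g$, using the exponential ``eigenfunctions'' of \cite{YZ06} as the key new input. The argument proceeds in three stages: build a linear test function $\phi_q$, derive a nonlinear integral inequality for the blow-up functional $G_q(t)=\int_{\R^n}u(x,t)\phi_q(x,t)\,dx$, and close the argument by a slicing iteration that yields the lifespan bound.

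For the first stage, I would invoke \cite{YZ06} to obtain, under (\ref{g1}), for each $\omega\in S^{n-1}$ a smooth function $\Phi(x,\omega)$ solving $\Delta_g\Phi=\Phi$ and behaving like $e^{x\cdot\omega}$ up to an exponentially small correction. Averaging these against a measure on $S^{n-1}$ and modulating by a polynomial factor in $t$ multiplied by $e^{-t}$, one constructs an exact solution $\phi_q$ of the linear equation $\p_t^2\phi-\Delta_g\phi=0$ depending on a free parameter $q>0$. Asymptotic (Laplace-type) analysis of the sphere average yields two-sided bounds on $\phi_q(x,t)$ and on integrals such as $\int\phi_q^{p'}\,dx$ over the support cone, with the same leading behavior as in the Euclidean case thanks to the exponential decay of the perturbation.

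For the second stage, I would multiply the equation in (\ref{ivp}) by $\phi_q$, integrate over $\R^n\times[0,t]$, and use (\ref{support}) to discard spatial boundary terms, together with the linear equation satisfied by $\phi_q$ to eliminate second derivatives. The outcome is an identity linking $G_q(t)$ and $\int_0^t\!\!\int_{\R^n}|u|^p\phi_q\,dx\,ds$. Combined with H\"older's inequality and the estimates on $\phi_q$, this gives a closed nonlinear integral inequality of the form
$$G_q(t)\ge C_1\e+C_2\int_{R}^{t}(t-s)\,(1+s)^{-\alpha}\,G_q(s)^p\,ds,$$
where $q$ is tuned so that $\alpha=\alpha(n,p,q)$ makes the kernel logarithmically critical exactly at $p=p_0(n)$, reflecting $\gamma(p_0(n),n)=0$.

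For the third stage, I would run the slicing iteration of \cite{AKT00}. On a sequence of time slices $[L_{j-1},L_j]$ with $L_j=L_0\prod_{k=1}^{j}(1+2^{-k})\to 2L_0$, the integral inequality propagates improved lower bounds $G_q(t)\ge D_j\bigl(\log(t/L_j)\bigr)^{a_j}$ with $a_j=(p^j-1)/(p-1)$. Tracking the recursion for $D_j$ shows that these bounds force $G_q(t)\to\infty$ unless $\log T_\e\le K\e^{-p(p-1)}$, which is exactly (\ref{thm:lifespan}). The main technical obstacle I anticipate is controlling, uniformly across the iteration, the error terms generated by the difference $\Phi(x,\omega)-e^{x\cdot\omega}$, since these enter with the $p$-th power at every step; the exponential decay hypothesis (\ref{g1}) is precisely what keeps the accumulated error subleading and preserves the sharp critical lifespan.
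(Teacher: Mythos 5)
Your overall blueprint is the same as the paper's: construct a positive solution of the homogeneous wave equation with the right self-similar decay, average $u$ against it, use H\"older to close a nonlinear integral inequality, and run an Agemi--Kurokawa--Takamura slicing iteration. Stages 2 and 3 as you describe them are essentially what the paper does, and the recursion you write for $a_j$ and the conclusion $\log T_\e\lesssim \e^{-p(p-1)}$ match.

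However, your Stage~1 as written does not produce the test function. You propose to take, for each $\omega$, a solution $\Phi(\cdot,\omega)$ of $\Delta_g\Phi=\Phi$ (a single fixed eigenvalue), average over $S^{n-1}$, and then multiply by ``a polynomial factor in $t$ times $e^{-t}$'' to get an exact solution of $\partial_t^2\phi-\Delta_g\phi=0$. This cannot work: if $\Phi$ has eigenvalue $1$, then $f(t)\Phi(x)$ solves the wave equation only for $f''=f$, i.e.\ $f\in\mathrm{span}\{e^{t},e^{-t}\}$; no polynomial modulation is admissible, and $e^{-t}\Phi(x)$ decays exponentially in $t$, not like the power law $\langle t-|x|\rangle^{(n-3)/2-q}$ you need. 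What the paper actually does --- and what is indispensable to get a $q$-tunable power decay --- is to construct a \emph{one-parameter family} $\varphi_\lambda$ with $\Delta_g\varphi_\lambda=\lambda^2\varphi_\lambda$, uniformly for $\lambda\in(0,\lambda_0]$, with two-sided bounds $\varphi_\lambda(x)\asymp\langle\lambda|x|\rangle^{-(n-1)/2}e^{\lambda|x|}$, and then superpose
\[
\phi_q(x,t)=\int_0^{\lambda_0}e^{-\lambda(t+R)}\varphi_\lambda(x)\,\lambda^{q-1}\,d\lambda ,
\]
together with the $\cosh$ and $\sinh$ variants $\xi_q$ and $\eta_q$ needed in the Duhamel identity. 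The polynomial decay in $t-|x|$ and the free exponent $q$ both arise from integrating over the eigenvalue $\lambda$ near $0$, not from modulation in time. Since $\Delta_g$ is not scale-invariant, you cannot obtain $\varphi_\lambda$ by rescaling a single $\Phi$; the paper devotes all of Section~2 (an elliptic perturbation argument with energy estimates, Gagliardo--Nirenberg, and a De~Giorgi--Nash--Moser type local $L^\infty$ bound) to producing this family with estimates uniform in $\lambda$. This is the key new technical input beyond \cite{YZ06} and you should not treat it as a black box.

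A smaller but still substantive point: your closed inequality
$G_q(t)\ge C_1\e+C_2\int_R^t(t-s)(1+s)^{-\alpha}G_q(s)^p\,ds$
is missing both the outer factor $\langle t\rangle^{-1}$ and, more importantly, the logarithmic loss $(\log\langle s\rangle)^{p-1}$ in the denominator that the H\"older step produces at the critical exponent (it comes from $\int_{|x|\le s+R}\langle s-|x|\rangle^{-1}dx\lesssim\langle s\rangle^{n-1}\log\langle s\rangle$). That logarithm is precisely what makes the iteration yield $\log T_\e\lesssim\e^{-p(p-1)}$ rather than a power law; without recording it you cannot obtain the stated lifespan bound. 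So while you are right that the kernel is ``logarithmically critical,'' the inequality as you wrote it does not reflect this and the slicing step would not close with the correct rate.
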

\begin{rem}
\label{rem111}
The local well-posedness in $H^1(\Real^n)\times L^2(\Real^n)$ is actually given by Brenner~\cite{Br1}.
Our assumptions on the support of solutions can also be verified by Theorem~8 in 7.2 of Evans~\cite{Ev1}.
\end{rem}

To establish Theorem~\ref{thm:main}, we are guided by Zhou~\cite{Z07} and Zhou $\&$ Han~\cite{ZH14}.
Their method introduces and estimates averages of products
$u\phi_q,$ where $\phi_q$ is a smooth positive solution to $(\partial_t^2-\Delta_g)\phi_q=0$ with
behavior as $t-|x|\rightarrow\infty$ determined by a parameter $q$. Fujita~\cite{Fu} also studies
the blow-up problem for nonlinear reaction diffusion equations through averages
with test functions solving the conjugate linear equation. Basically, equation (\ref{ivp}) is multiplied by $\phi_q$
and, after integration by parts and H\"{o}lder's inequality,
a nonlinear differential or integral inequality is derived for
\begin{equation}
\label{Gq}
\int u(x,t)\phi_q(x,t)\: dx.
\end{equation}
Then $q=q(n,p)$ is chosen to optimize the lower bound on this functional. Finite time
blow-up and lifespan estimates are obtained by either a comparison theorem (for differential inequalities)
or an iteration argument (for integral inequalities).

Our proof follows the above steps, although details and notations in Sections 3--5 are slightly different.
An interesting fact is that $\phi_q(x,t)$ with the typical behavior can be constructed even in the case of generalized Laplacian: for any $\lambda_0\in (0,\beta_0)$,
\[
\phi_q (x,t)  = \int_{0}^{\lambda_0}e^{-\lambda(t+R_0)} \varphi_\lambda(x)\lambda^{q-1} \: d\lambda, \quad |x|\leq t+R_0,
\]
where $\vp_\lambda$ is a smooth positive solution to $\Delta_g \varphi_\lambda = \lambda^2\varphi_{\lambda}$, such that
\[
\varphi_\lambda(x) \sim \int_{\S^{n-1}}e^{\lambda x\cdot\omega}dS_{\omega}\sim c_n(\lambda |x|)^{-(n-1)/2}e^{\lambda |x|}, \quad \lambda|x|\rightarrow\infty.
\]

The rest of this paper is organized as follows. In Section~2, we construct $\vp_\lambda(x)$ and study its asymptotics at large $|x|$ and small $\lambda$.
The analog of $\phi_q(x,t)$ is defined and estimated in Section~3.
In Section~4, we derive a nonlinear integral inequality to be used in Section~5 for the proof of Theorem~\ref{thm:main}.

\section{Elliptic equation with small parameter}

Here we will find smooth positive solutions to the elliptic ``eigenvalue problem"
\begin{equation}
\label{eigen-pr}
\Delta_g \varphi_\lambda = \lambda^2\varphi_{\lambda},\quad x\in \Real^n,
\end{equation}
where $\lambda\in (0,\beta/2].$ As $\lambda|x|\rightarrow\infty$, these $\vp_\lambda(x)$ are asymptotically given by $\varphi(\lambda x),$ with $\vp$ being the standard radial solution to the unperturbed equation $\Delta \vp =\vp$:
\begin{equation}
\label{eigen}
\varphi(x)=\int_{\S^{n-1}}e^{x\cdot\omega}dS_{\omega}\sim c_n |x|^{-(n-1)/2}e^{|x|},
\quad |x|\rightarrow\infty.
\end{equation}

The proof relies on the following classical local estimate for weak solutions to
\begin{equation}
\label{elliptic}
(-\Delta_g+\lambda^2)v=f,\quad x\in\R^n.
\end{equation}

\begin{lem}
\label{lem0}
 Assume that $n\geq 2$, $\lambda>0$ and $\Delta_g$ satisfies (\ref{g}) and (\ref{g1}).
  For $q>n$, let $f\in L^{q/2}(\R^n)$ and $v\in H^{1}(\Real^n)$ be the unique weak solution to (\ref{elliptic}).
  Given $y\in\R^n$ and $\rho\in [1,2]$, denote also $B_y(\rho)=\{x\in \R^n:|x-y|\leq \rho\}$.

  Under these assumptions, for any $r>1$,
\begin{eqnarray*}
\|v\|_{L^\infty(B_y(\rho))}& \leq & C \left(\rho^{-n/r}\|v\|_{L^r(B_y(2\rho))}+ \rho^{2(1-n/q)}\|f\|_{L^{q/2}(\R^n)}\right),
\end{eqnarray*}
where $C$ depends only on $p$, $r$, $\rho$, $n$ and the coefficients of $g$.
\end{lem}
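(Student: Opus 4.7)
The plan is to prove this Moser-type local boundedness estimate by the classical Moser iteration, which applies because conditions (\ref{g})--(\ref{g1}) make $L := -\Delta_g + \lambda^2$ uniformly elliptic with coefficient bounds independent of $\lambda$. The zeroth-order term $\lambda^2 v$ has a favorable sign and only strengthens the energy estimates, so I would simply discard it from the left-hand side after testing the equation.

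The core of the argument is the fundamental iteration step. Fix $y \in \R^n$ and radii $\rho \le \rho_1 < \rho_2 \le 2\rho$, and choose $\eta \in C_0^\infty(B_y(\rho_2))$ with $\eta \equiv 1$ on $B_y(\rho_1)$ and $|\nabla \eta| \le C/(\rho_2-\rho_1)$. For $\beta \ge 2$, test the weak form of (\ref{elliptic}) with $w = |v|^{\beta-2} v \eta^2 \in H_0^1(B_y(\rho_2))$; using (\ref{g}) and Cauchy--Schwarz to absorb the cross term, then applying the Sobolev embedding $H_0^1 \hookrightarrow L^{2^{*}}$ (with $2^{*} = 2n/(n-2)$ when $n \ge 3$, and any sufficiently large exponent when $n = 2$), one arrives at
\[
\|v\|_{L^{\chi \beta}(B_y(\rho_1))}^{\beta} \le \frac{C\beta^2}{(\rho_2-\rho_1)^2}\|v\|_{L^\beta(B_y(\rho_2))}^\beta + C\beta^2\, \|f\|_{L^{q/2}(\R^n)}\, \|v\|_{L^{(\beta-1)\bar q}(B_y(\rho_2))}^{\beta-1},
\]
where $\chi := 2^{*}/2 > 1$ and $\bar q := q/(q-2)$. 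The hypothesis $q > n$ is essential here: it yields $\bar q < \chi$, so the $f$-term iterates consistently with the principal $v$-term. A customary symmetrization---adding a positive constant proportional to a suitable $\rho$-scaled norm of $f$ to $|v|$---reduces the recursion to a clean one-term form. I would then set $\beta_k := 2\chi^k$ and $\rho_k := \rho(1 + 2^{-k})$ and iterate, letting $k \to \infty$. The product $\prod_{k \ge 0}(C\beta_k^2 / (\rho_k - \rho_{k+1})^2)^{1/\beta_k}$ converges thanks to the exponential growth of $\beta_k$, and telescoping produces the factors $\rho^{-n/2}\|v\|_{L^2(B_y(2\rho))}$ and $\rho^{2(1-n/q)}\|f\|_{L^{q/2}}$, with the powers of $\rho$ prescribed by the natural scaling of the equation. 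This proves the estimate for $r = 2$.

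To pass from $r = 2$ to arbitrary $r > 1$, I would apply the standard interpolation--absorption trick. The case $r \ge 2$ is immediate from H\"older. For $1 < r < 2$, interpolation gives $\|v\|_{L^2} \le \|v\|_{L^\infty}^{1 - r/2} \|v\|_{L^r}^{r/2}$; substituting into the $r = 2$ estimate applied on a shrinking family of balls and using Young's inequality produces an inequality of the form $F(\rho_1) \le \tfrac{1}{2} F(\rho_2) + A(\rho_2 - \rho_1)^{-n/r} + B$ for $F(\rho) := \|v\|_{L^\infty(B_y(\rho))}$, from which a standard iteration lemma yields the claimed bound. The main technical obstacle is bookkeeping the Moser iteration, in particular verifying that the $f$-contribution telescopes to the clean power $\rho^{2(1-n/q)}$ without divergence; this rests on the strict inequality $\bar q < \chi$ and the geometric convergence built into $\beta_k$ and $\rho_k$. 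The ellipticity and boundedness constants from (\ref{g})--(\ref{g1}) enter only the absolute constant $C$, which is independent of $\lambda$ and $y$.
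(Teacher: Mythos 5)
The paper does not prove this lemma: it simply invokes Theorem~8.17 of Gilbarg--Trudinger, of which the statement is a direct special case. Your argument is precisely the standard Moser-iteration proof of that theorem (energy estimate with test function $|v|^{\beta-2}v\eta^2$, Sobolev embedding with $\chi=2^*/2$, the role of $q>n$ in guaranteeing $\bar q=q/(q-2)<\chi$, symmetrization by $|v|+k$ with $k\sim\rho^{2(1-n/q)}\|f\|_{q/2}$, geometric choice of radii, then interpolation--absorption to pass from $r=2$ to general $r>1$), so it is correct and matches the cited source; the one point you glide over is the usual truncation of $v$ needed to justify the test function for $\beta>2$, which is routine.
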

This is a special case of Theorem~8.17 in Gilbarg and Trudinger~\cite{GT}.
We obtain the main result of this section as a simple application.

\begin{lem}
\label{lem1}
Let $n\geq 2$. There exists a solution $\varphi_\lambda\in C^\infty(\Real^n)$ to
(\ref{eigen-pr}), such that
\begin{equation}
\label{lem1:ineq}
|\varphi_\lambda(x)-\varphi(\lambda x)|\leq C_\beta\lambda^{\theta},\quad x\in \Real^n,
\quad \lambda\in (0,\beta/2],
\end{equation}
where $\theta\in (0,1]$ and
$\varphi(x)=\int_{\S^{n-1}}e^{x\cdot\omega} dS_\omega\sim c_n|x|^{-(n-1)/2}e^{|x|},$
$c_n>0,$ as $|x|\rightarrow\infty.$

Moreover, $\vp_\lambda(\: \cdot\:)-\vp(\lambda \: \cdot)$ is a continuous $L^\infty (\Real^n)$ valued
function of $\lambda\in (0,\beta/2]$ and there exist positive constants $D_0,$ $D_1$ and $\lambda_0$, such that
\begin{equation}
\label{2sided}
D_0 \langle \lambda|x|\rangle^{-(n-1)/2}e^{\lambda|x|}\leq \varphi_\lambda(x)\leq D_1
\langle \lambda|x|\rangle^{-(n-1)/2}e^{\lambda|x|}, \quad x\in\Real^n,
\end{equation}
holds whenever $0<\lambda\leq \lambda_0$.
\end{lem}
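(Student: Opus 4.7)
The plan is to look for $\varphi_\lambda$ in the ansatz $\varphi_\lambda(x) = \varphi(\lambda x) + w_\lambda(x)$, treating $w_\lambda$ as a small correction. Since $\Delta\varphi=\varphi$, the scaling identity $\Delta \varphi(\lambda x) = \lambda^2 \varphi(\lambda x)$ reduces the eigenvalue equation $\Delta_g \varphi_\lambda = \lambda^2 \varphi_\lambda$ to
\[
(-\Delta_g + \lambda^2)w_\lambda = f_\lambda, \qquad f_\lambda := (\Delta_g - \Delta)\varphi(\lambda\,\cdot).
\]
The operator $\Delta_g - \Delta = \sum_{i,j}(g_{ij}-\delta_{ij})\p_i\p_j + \sum_{i,j}(\p_i g_{ij})\p_j$ has coefficients of size $O(e^{-\beta|x|})$ by \eqref{g1}, while each derivative applied to $\varphi(\lambda x)$ carries an extra factor of $\lambda$ and $\varphi(\lambda x)$ itself grows at most like $\langle\lambda x\rangle^{-(n-1)/2}e^{\lambda|x|}$. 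For $\lambda\leq\beta/2$ the exponential decay dominates the growth, and a direct pointwise estimate yields $|f_\lambda(x)|\leq C\lambda\, e^{-\beta|x|/2}$, hence $\|f_\lambda\|_{L^{2}(\R^n)}+\|f_\lambda\|_{L^{q/2}(\R^n)}\leq C\lambda$ uniformly in $\lambda\in(0,\beta/2]$.

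Next I would solve the equation for $w_\lambda$ by applying Lax--Milgram to the bilinear form $a_\lambda(u,v)=\int_{\R^n}\sum_{i,j}g_{ij}\p_i u\,\p_j v\,dx+\lambda^2\int_{\R^n}uv\,dx$ on $H^1(\R^n)$, whose coercivity for $\lambda>0$ follows from \eqref{g}. Elliptic regularity together with $g_{ij},f_\lambda\in C^\infty$ upgrades the weak solution $w_\lambda$ to $C^\infty(\R^n)$. The crux is the uniform bound $\|w_\lambda\|_{L^\infty}\leq C_\beta\lambda^\theta$: testing naively with $v=w_\lambda$ only gives $\|w_\lambda\|_{L^2}\leq C\lambda^{-1}\|f_\lambda\|_{L^2}\leq C$, which degenerates as $\lambda\downarrow 0$. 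Instead I would use Sobolev embedding and H\"{o}lder's inequality to bypass the $\lambda^2$-term and obtain
\[
\|\nabla w_\lambda\|_{L^2(\R^n)} \leq C\|f_\lambda\|_{L^{2n/(n+2)}(\R^n)} \leq C\lambda \qquad (n\geq 3),
\]
with a Gagliardo--Nirenberg substitute in dimension $n=2$. This yields a uniform $L^r(\R^n)$ bound of order $\lambda$ for some $r>n$, and Lemma \ref{lem0} applied on balls $B_y(\rho)$ of bounded radius centred at arbitrary $y\in\R^n$ then promotes it to
\[
\|w_\lambda\|_{L^\infty(B_y(1))}\leq C\bigl(\|w_\lambda\|_{L^r(B_y(2))}+\|f_\lambda\|_{L^{q/2}(\R^n)}\bigr)\leq C\lambda^\theta
\]
for a suitable $\theta\in(0,1]$. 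Continuity of $\lambda\mapsto w_\lambda$ in $L^\infty$ follows by applying the same estimates to the differences $w_{\lambda_1}-w_{\lambda_2}$, whose source term tends to $0$ in the relevant norms as $\lambda_1\to\lambda_2$.

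For the two-sided bound \eqref{2sided}, I would use the explicit formula $\varphi(y)=\int_{\S^{n-1}}e^{y\cdot\omega}\,dS_\omega$ to verify, by stationary phase for $|y|\geq 1$ and continuity for $|y|\leq 1$, that $\varphi$ is smooth, strictly positive, and satisfies $c_n\langle y\rangle^{-(n-1)/2}e^{|y|}\leq \varphi(y)\leq C_n\langle y\rangle^{-(n-1)/2}e^{|y|}$ on all of $\R^n$. Setting $y=\lambda x$ and invoking \eqref{lem1:ineq}, the error $C_\beta\lambda^\theta$ is dominated by half the lower envelope $c_n\langle\lambda x\rangle^{-(n-1)/2}e^{\lambda|x|}$ uniformly in $x$ provided $\lambda\leq\lambda_0$ is small enough, since this envelope is bounded below by a positive constant when $\lambda|x|$ is bounded and grows exponentially otherwise. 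This simultaneously yields positivity of $\varphi_\lambda$ and the claimed two-sided bound.

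The main obstacle is the uniform $L^\infty$ control on $w_\lambda$: the resolvent $(-\Delta_g+\lambda^2)^{-1}$ is not uniformly bounded on $L^2(\R^n)$ as $\lambda\downarrow 0$, so one cannot rely on naive energy estimates and must exploit both the smallness $O(\lambda)$ and the exponential decay of the source $f_\lambda$. This is also where the exponent $\theta\in(0,1]$ can fall short of $1$, particularly in dimension $n=2$ where the borderline Sobolev embedding forces a loss.
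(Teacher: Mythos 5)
Your proposal is correct and follows essentially the same route as the paper: the decomposition $\varphi_\lambda=\varphi(\lambda\,\cdot)+w_\lambda$, an $H^1$ energy bound on $w_\lambda$, Gagliardo--Nirenberg interpolation to obtain an $L^r$ bound of order $\lambda^\theta$, and Lemma~\ref{lem0} to upgrade to $L^\infty$ uniformly over unit balls. The only point where you diverge slightly is the gradient estimate $\|\nabla w_\lambda\|_{L^2}\le C\lambda$: you obtain it by pairing the source against $w_\lambda$ and invoking the dual Sobolev embedding $L^{2n/(n+2)}\hookrightarrow H^{-1}$, which breaks at $n=2$ and forces the substitute you flag. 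The paper instead exploits that $f_\lambda=\nabla\cdot\bigl((g-I)\nabla\varphi(\lambda\,\cdot)\bigr)$ is already in divergence form, integrates by parts once to get $-\langle(g-I)\nabla\varphi(\lambda\,\cdot),\nabla\psi_\lambda\rangle\le C\lambda\|\nabla\psi_\lambda\|_{L^2}$, and thus gets the same bound uniformly for all $n\ge2$ without Sobolev embedding. This is marginally cleaner and avoids the two-dimensional special case, but the two arguments are interchangeable. (Minor nit: Lemma~\ref{lem0} only requires $r>1$, not $r>n$; the $q>n$ condition concerns the source norm.)
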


\begin{proof}
Let us choose $\lambda\in (0,\beta/2]$ and consider the elliptic equation
\begin{equation}
\label{psi11}
(-\Delta_g +\lambda^2)\psi_\lambda(x) =f_\lambda(x),
\end{equation}
with $f_\lambda(x)=(\Delta_g-\Delta)\vp(\lambda x)$. There exists a unique solution $\psi_\lambda\in H^{1}(\Real^n)$.
Then $\varphi_\lambda(x):=\varphi(\lambda x)+\psi_\lambda(x)$ satisfies (\ref{eigen-pr}), since
$\Delta\varphi(\lambda x)=\lambda^2\varphi(\lambda x)$. To estimate $\psi_\lambda$ for small $\lambda>0,$
 we take the inner product of (\ref{psi11}) with $\psi_\lambda$ and obtain
\begin{eqnarray*}
\langle g \nabla \psi_{\lambda}, \nabla \psi_{\lambda} \rangle+\lambda^2\|\psi_\lambda\|_{L^2}^2
&=&\langle \nabla (g-I)\nabla \varphi(\lambda x), \psi _{\lambda}\rangle \\
&=&-\langle (g-I)\nabla \varphi(\lambda x), \nabla \psi _{\lambda}\rangle\\
&\leq& C\lambda \|\nabla \psi_{\lambda}\|_{L^2}.
\end{eqnarray*}
The last inequality follows from $\lambda\le \beta/2$ and the fact that $g$ satisfies (\ref{g1}).
We can use (\ref{g}) to further derive
$\gamma \|\nabla \psi_\lambda\|_{L^2}^2+\lambda^2\|\psi_\lambda\|_{L^2}^2
\le C\lambda \|\nabla \psi_{\lambda}\|_{L^2}$, which implies both
\[
\|\nabla \psi_\lambda\|_{L^2}\le C\lambda \ \hbox{ and } \ \|\psi_\lambda\|_{L^2}\le C.
\]
The Gagliardo-Nirenberg inequality allows us to bound also the intermediate norms:
\[
\|\psi_\lambda \|_{L^r}\le C\|\psi_\lambda \|_{L^2}^{1-\theta}\|\nabla \psi_\lambda \|_{L^2}^{\theta},
\]
if $\theta=n(1/2-1/r)\in [0,1]$. We fix $r>2$, such that $\theta>0$, and get
$
\|\psi_{\lambda}\|_{L^r}\le C\lambda^{\theta}.
$
This estimate is substituted into Lemma~\ref{lem0} with $\rho=2$:
\begin{eqnarray*}
\|\psi_\lambda\|_{L^\infty(B_y(2))} & \leq  & C\left(\|\psi_\lambda\|_{L^r(B_y(4))}+\|f_\lambda\|_{L^{q/2}(\R^n)}\right)\\
                                    & \leq & C(\lambda^\theta+\lambda+\lambda^2).
\end{eqnarray*}
Hence, $\|\psi_\lambda \|_{L^\infty(B_y(2))}\leq C\lambda^{\theta}$,
where $C$ is independent of $y$. This bound holds for every $y$ with integer coordinates, so we conclude that $\|\psi_\lambda \|_{L^\infty(\R^n)}\leq C\lambda^{\theta}$,
which is the desired estimate. Finally, we combine $\Delta_g \varphi_\lambda = \lambda^2\varphi_{\lambda}$ and
$\vp_\lambda\in L_{loc}^\infty(\R^n)$ with Theorem~8.10 in \cite{GT} to obtain that $\vp_\lambda\in C^\infty(\R^n)$.

It remains to show that $\psi_\lambda(x)=\vp_\lambda(x)-\vp(\lambda x)$ is a continuous $L^\infty(\Real^n)$ valued function of $\lambda$. We consider the equation for $\psi_{\lambda}-\psi_\nu$, where $\lambda,\: \nu\in (0,\beta/2]$:
\[
(-\Delta_g +\lambda^2)(\psi_\lambda-\psi_\nu) =(f_\lambda-f_\nu) +(\nu^2-\lambda^2)\psi_\nu.
\]
The inner product with $\psi_\lambda-\psi_\nu$ in $L^2(\Real^n)$ yields the estimate
\begin{eqnarray*}
\langle g \nabla (\psi_\lambda-\psi_\nu), \nabla (\psi_\lambda-\psi_\nu) \rangle
+\lambda^2\|\psi_\lambda-\psi_\nu\|_{L^2}^2 &\leq & C(\lambda)\|f_\lambda-f_\nu\|_{L^2}^2\\
 & & +C(\lambda)|\nu^2-\lambda^2|^2\|\psi_\nu\|_{L^2}^2.
\end{eqnarray*}
Since $\|\psi_\nu\|_{L^2}\leq C$ and 
$f_{\lambda}(x)-f_{\nu}(x)=(\Delta_g-\Delta)(\vp(\lambda x)-\vp(\nu x))$, 
we get 
\[
\gamma^{1/2}\|\nabla (\psi_\lambda-\psi_\nu) \|_{L^2}+\lambda\|\psi_\lambda-\psi_\nu\|_{L^2}\leq 
C_1(\lambda)|\lambda-\nu|.
\]
By the Gagliardo-Nirenberg inequality, $\|\psi_\lambda-\psi_\nu\|
_{L^r}\leq C_2(\lambda)|\lambda-\nu|$ whenever
$n(1/2-1/r)\in [0,1].$ We combine this and Lemma~\ref{lem0} with  $\rho=2$,  $q>\max\{n,4\}$:
\begin{eqnarray*}
\|\psi_\lambda-\psi_\nu \|_{L^\infty(B_y(2))} & \leq  &
  C\|\psi_\lambda-\psi_\nu \|_{L^r(B_y(4))}+C\|f_\lambda-f_\nu\|_{L^{q/2}(\R^n)}\\
        & & +C|\lambda^2-\nu^2|\|\psi_\nu\|_{L^{q/2}(\R^n)}.
\end{eqnarray*}
Then, $ \|\psi_\nu\|_{L^{q/2}(\R^n)}^{q/2} \leq \|\psi_\nu\|_{L^\infty(\R^n)}^{q/2-2} \|\psi_\nu\|_{L^2(\R^n)}^2\leq C_3(\lambda)$, so we have
$$\|\psi_\lambda-\psi_\nu \|_{L^\infty(B_y(2))} \leq C_4(\lambda)|\lambda-\nu|.$$
The independence of $y$ implies $\|\psi_\lambda-\psi_\nu\|_{L^\infty(\R^n)}\leq C_4(\lambda)|\lambda-\nu|\rightarrow 0$ as $ \nu\rightarrow\lambda.$

Finally, (\ref{2sided}) follows from (\ref{lem1:ineq}) and $\vp(0)=\hbox{area}(\S^{n-1})>0.$
\end{proof}

\section{Test functions}

We define and estimate two test functions, solutions of the linear wave equation, which are used to derive
the nonlinear integral inequality (\ref{frame}) in the next section.

For $\lambda_0\in (0,\beta/2]$ and $q>-1$, let
\begin{eqnarray}
\label{aq}
\xi_q (x,t) & = & \int_{0}^{\lambda_0}e^{-\lambda(t+R)}\cosh\lambda t \: \varphi_\lambda(x)\lambda^{q} d\lambda,\\
\eta_q (x,t,s) & = & \int_{0}^{\lambda_0}e^{-\lambda(t+R)}\frac{\sinh\lambda (t-s)}{\lambda(t-s)}\: \varphi_\lambda(x)\lambda^{q} d\lambda,
\label{bq}
\end{eqnarray}
where $(x,t)\in \R^n\times \R$ and $s\in\R.$
In fact, $\eta_q(x,t,t)$ solves $(\partial_{t}^2-\Delta_g)\eta_{q}(x,t,t)=0$ and generalizes
the test function $\phi_{q+1}(x,t)$ introduced in \cite{Z07}.

\begin{lem}
\label{lem2}
Let $n\geq 2$. There exists $\lambda_0\in (0,\beta/2]$, such that the following hold:

(i) if $0<q$, $|x|\leq R$ and $0\leq t$, then
\begin{eqnarray*}
\xi_q (x,t) & \geq & A_0,\\
\eta_q (x,t,0) & \geq & B_0\langle t\rangle^{-1};
\end{eqnarray*}

(ii) if $0<q$, $|x|\leq s+R$ and $0\leq s<t$, then
\begin{eqnarray*}
\eta_q (x,t,s) & \geq & B_1 \langle t\rangle^{-1}\langle s\rangle^{-q};
\end{eqnarray*}

(iii) if $(n-3)/2<q$, $|x|\leq t+R$ and $0<t$, then
\begin{eqnarray*}
\eta_q (x,t,t) & \leq & B_2 \langle t\rangle^{-(n-1)/2}\langle t-|x| \rangle^{(n-3)/2-q}.
\end{eqnarray*}
Here $A_0$ and $B_k$, $k=0,1,2,$ are positive constants depending only on $\beta$, $q$ and $R$,
while $\langle s\rangle =3+|s|$ is used to simplify estimates in Sections 4 and 5.
\end{lem}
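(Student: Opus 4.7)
The plan is to use the pointwise bounds on $\varphi_\lambda$ from Lemma~\ref{lem1} and reduce each assertion to an explicit one-dimensional $\lambda$-integral. Fix $\lambda_0\in(0,\beta/2]$ small enough that (\ref{2sided}) holds throughout $(0,\lambda_0]$. A useful consequence of (\ref{2sided}) is the uniform lower bound $\varphi_\lambda(x) \geq c_0 > 0$ for all $\lambda\in(0,\lambda_0]$ and all $x\in\R^n$, because $y\mapsto y^{-(n-1)/2}e^y$ has positive infimum on $[0,\infty)$. This uniform bound drives parts (i) and (ii), while part (iii) uses the upper half of (\ref{2sided}).

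For part (i), I would rewrite
\[
e^{-\lambda(t+R)}\cosh\lambda t = \frac{e^{-\lambda R}(1+e^{-2\lambda t})}{2}, \qquad e^{-\lambda(t+R)}\frac{\sinh\lambda t}{\lambda t} = \frac{e^{-\lambda R}(1-e^{-2\lambda t})}{2\lambda t}.
\]
The first identity gives at once the positive lower bound $\xi_q(x,t) \geq (c_0/2)\int_0^{\lambda_0}e^{-\lambda R}\lambda^q\,d\lambda =: A_0$. The second pulls out the factor $1/t$, after which I would restrict integration to $\lambda \geq 1/(2t)$, where $1-e^{-2\lambda t} \geq 1-e^{-1}$; since $q>0$ keeps $\lambda^{q-1}$ integrable at the origin, the remaining integral is bounded below by a positive constant for $t \geq 1/(2\lambda_0)$, producing $\eta_q(x,t,0) \geq B_0/t$. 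The regime $t \leq 1/(2\lambda_0)$ follows from continuity together with $\lim_{t\to 0^+}\eta_q(x,t,0) = c_0\int_0^{\lambda_0}e^{-\lambda R}\lambda^q\,d\lambda > 0$.

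For part (ii), the identity
\[
e^{-\lambda(t+R)}\frac{\sinh\lambda(t-s)}{\lambda(t-s)} = \frac{e^{-\lambda(2s+R)}-e^{-\lambda(2t+R)}}{2\lambda(t-s)}
\]
suggests a dichotomy. When $t \leq 5s+2R$, so that $s$ and $t$ are comparable, the trivial bound $\sinh y/y \geq 1$ yields $\eta_q \geq c_0\int_0^{\lambda_0}e^{-\lambda(t+R)}\lambda^q\,d\lambda$; after substituting $u=\lambda(t+R)$ and invoking $q>0$, this is $\geq C\langle t\rangle^{-(q+1)} \geq C'\langle t\rangle^{-1}\langle s\rangle^{-q}$. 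When $t > 5s+2R$, the window $\lambda \in [1/(t-s),\,1/(2s+R)]$ is nonempty and comfortably long; on it $e^{-\lambda(2s+R)} \geq e^{-1}$ and $\sinh\lambda(t-s)/(\lambda(t-s)) \geq (1-e^{-2})e^{\lambda(t-s)}/(2\lambda(t-s))$, so the integrand is at least of order $\lambda^{q-1}/(t-s)$, and the window integral delivers $\geq C\langle s\rangle^{-q}/\langle t\rangle$.

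Part (iii) is the main obstacle. Since $\sinh y/y \to 1$ as $y\to 0$, one has $\eta_q(x,t,t) = \int_0^{\lambda_0}e^{-\lambda(t+R)}\varphi_\lambda(x)\lambda^q\,d\lambda$, and the upper half of (\ref{2sided}) reduces the task to estimating
\[
J(a,r) = \int_0^{\lambda_0}e^{-\lambda a}\langle\lambda r\rangle^{-(n-1)/2}\lambda^q\,d\lambda,
\]
where $r=|x|$ and $a = t+R-r \geq 0$. For $r > 1/\lambda_0$ I would split at $\lambda = 1/r$ (the complementary range is immediate because $\langle\lambda r\rangle$ then stays bounded). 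On the near piece $\lambda\leq 1/r$ the weight is of order one and an elementary bound gives $\leq C\min(r^{-(q+1)},a^{-(q+1)})$. On the far piece $\lambda \geq 1/r$ I would use $\langle\lambda r\rangle^{-(n-1)/2}\leq C(\lambda r)^{-(n-1)/2}$ and substitute $u = \lambda a$; the hypothesis $q>(n-3)/2$ ensures $q-(n-1)/2>-1$, so the incomplete Gamma integral $\int_{a/r}^{\lambda_0 a}e^{-u}u^{q-(n-1)/2}\,du$ converges, and the piece is $\leq Cr^{-(n-1)/2}a^{(n-3)/2-q}$. Matching these bounds to $\langle t\rangle^{-(n-1)/2}\langle t-|x|\rangle^{(n-3)/2-q}$ is immediate when $|x|\geq t/2$ (since $r\sim t$); the subtle case is $|x| < t/2$, where $a\sim t$ but $r^{-(n-1)/2}$ may far exceed $t^{-(n-1)/2}$. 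The rescue is the asymptotics of the incomplete Gamma function, which contribute an extra factor $(a/r)^{q-(n-1)/2}e^{-a/r}$; combined with the elementary inequality $\sup_{y>0}y^q e^{-y} = q^q e^{-q}$ this collapses the far piece to $Ca^{-(q+1)}$, matching the target since $a\sim t$ in this regime.
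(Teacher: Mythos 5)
Your overall strategy mirrors the paper's: invoke the two-sided bound (\ref{2sided}) from Lemma~\ref{lem1} to reduce each assertion to a one-dimensional $\lambda$-integral, with a uniform positive lower bound on $\varphi_\lambda$ driving (i)--(ii) and the upper half of (\ref{2sided}) driving (iii). Parts (i) and (iii) are sound. For (iii) your route is somewhat heavier than the paper's: you split the $\lambda$-integral at $\lambda = 1/r$ and need the asymptotics of the incomplete Gamma function (plus the $\sup_y y^q e^{-y}$ trick) to tame the regime $|x|<t/2$, whereas the paper simply uses $\langle\lambda|x|\rangle^{-(n-1)/2}\leq\min\bigl(3^{-(n-1)/2},\,\lambda^{-(n-1)/2}\langle|x|\rangle^{-(n-1)/2}\bigr)$ and splits on $|x|\lessgtr (t+R)/2$, avoiding any asymptotics. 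Both work; the paper's is shorter.

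The genuine gap is in (ii), in the regime $t>5s+2R$. You integrate over the window $\lambda\in[1/(t-s),\,1/(2s+R)]$, but the domain of integration in (\ref{bq}) is $[0,\lambda_0]$ with $\lambda_0\leq\beta/2$, and $\beta$ is dictated by the decay rate of $g_{ij}-\delta_{ij}$, not by $R$. Nothing forces $1/(2s+R)\leq\lambda_0$; for $s$ near zero the upper endpoint is about $1/R$, which can exceed $\lambda_0$ whenever $\beta<2/R$. Worse, since $t-s$ can be as small as about $4s+2R$ when $t$ is just above $5s+2R$, even the lower endpoint $1/(t-s)\approx 1/(2R)$ may exceed $\lambda_0$, making the intersection of your window with $[0,\lambda_0]$ empty. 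So the bound $e^{-\lambda(2s+R)}\geq e^{-1}$, and the resulting integral estimate, are not available as written. The paper sidesteps this by using the window $[\lambda_0/\langle s\rangle,\,2\lambda_0/\langle s\rangle]$, which lies inside $(0,\lambda_0]$ automatically because $\langle s\rangle\geq 3$; on that window $\lambda(s+R)$ is bounded by a constant multiple of $\lambda_0$, so $e^{-\lambda(s+R)}\varphi_\lambda(x)$ is uniformly bounded below, and the prefactor $\bigl(1-e^{-\lambda_0(t-s)/\langle s\rangle}\bigr)/(2(t-s))$ is handled by distinguishing $\lambda_0(t-s)/\langle s\rangle\lessgtr 1$. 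You could repair your argument by rescaling your window by $\lambda_0$, e.g.\ replacing it with $[\lambda_0/(t-s),\,\lambda_0/(2s+R)]$ or simply adopting the paper's choice, but as stated the window choice is unjustified. (A smaller point: the identity you quote for $e^{-\lambda(t+R)}\sinh\lambda(t-s)/(\lambda(t-s))$ should have numerator $e^{-\lambda(s+R)}-e^{-\lambda(2t-s+R)}$, not $e^{-\lambda(2s+R)}-e^{-\lambda(2t+R)}$, though this does not change the essential shape of the argument.)
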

\begin{proof}

Claim $(i)$ is evident from (\ref{aq}), (\ref{bq}) and $\inf_{0\leq \lambda\leq \lambda_0}\inf_{|x|\leq R}\vp_\lambda(x)>0$:
\begin{eqnarray*}
\xi_q(x,t)  & \geq & \inf_{0\leq \lambda\leq \lambda_0}\inf_{|x|\leq R}\vp_\lambda(x) \int_{0}^{\lambda_0}e^{-\lambda R}
\frac{1+e^{-2\lambda t}}{2}\lambda^q\: d\lambda
                      \geq A_0,\\
\eta_q (x,t,0) & \geq & \inf_{0\leq \lambda\leq \lambda_0}\inf_{|x|\leq R}
\vp_\lambda(x)\int_{0}^{\lambda_0}e^{-\lambda R}
\frac{1-e^{-2 \lambda t}}{2\lambda t}\lambda^{q}\: d\lambda\geq B_0\langle t\rangle^{-1}.
\end{eqnarray*}

For claim $(ii)$, we combine (\ref{bq}) and the positivity of $\vp_\lambda(x)$ from (\ref{2sided}). Then
\begin{eqnarray*}
\eta_q (x,t,s) & = & \int_{0}^{\lambda_0}e^{-\lambda(t-s)}\frac{\sinh\lambda (t-s)}{\lambda(t-s)}\:
[e^{-\lambda(s+R)}\vp_\lambda(x)]\lambda^q d\lambda\\
            & \geq & \int_{\lambda_0/\langle s\rangle }^{2\lambda_0/\langle s\rangle }\frac{1-e^{-2\lambda (t-s)}}{2(t-s)}\: [e^{-\lambda(s+R)}\vp_\lambda(x)]\lambda^{q-1} d\lambda,
\end{eqnarray*}
since $\langle s\rangle\geq 2$ for all $s\in\R$. We further obtain that
\begin{eqnarray*}
 \eta_q (x,t,s) & \geq & A_1\int_{\lambda_0/\langle s\rangle}^{2\lambda_0/\langle s\rangle}\frac{1-e^{-2\lambda (t-s)}}{2(t-s)}\: \lambda^{q-1} d\lambda,
\end{eqnarray*}
with $A_1\leq \inf_{\lambda_0/\langle s\rangle \leq \lambda\leq 2\lambda_0/\langle s\rangle}
 \inf_{|x|\leq s+R}e^{-\lambda(s+R)}\vp_\lambda(x).$ It follows from (\ref{2sided}) that this
lower bound $A_1>0$ can be chosen independent of $x$ and $s$. We finally have
\begin{eqnarray*}
 \eta_q (x,t,s) & \geq & A_1\frac{1-e^{-\lambda_0(t-s)/\langle s\rangle}}{2(t-s)}\int_{\lambda_0/\langle s\rangle}^{2\lambda_0/\langle s\rangle}\lambda^{q-1} d\lambda\geq B_1 \langle t\rangle^{-1}\langle s \rangle^{-q}.
\end{eqnarray*}

The last claim $(iii)$ follows from the upper bound (\ref{2sided}) substituted into (\ref{bq}):
\begin{eqnarray*}
\eta_q (x,t,t) & \leq & D_0^{-1} \int_0^{\lambda_0} \frac{e^{-\lambda(t+R-|x|)}\lambda^{q} }{\langle\lambda|x|\rangle^{(n-1)/2}}\: d\lambda.
\end{eqnarray*}
It is convenient to consider two cases. If $|x|\leq (t+R)/2$, the estimate becomes
\[
\eta_q (x,t,t) \leq  D_1 \int_0^{\lambda_0} e^{-\lambda(t+R)/2}\lambda^{q} d\lambda\leq D_2 \langle t\rangle^{-q-1}.
\]
If $|x|\geq (t+R)/2$, the resulting bound is different:
\begin{eqnarray*}
\eta_q (x,t,t) & \leq  & D_0^{-1} \langle |x|\rangle^{-(n-1)/2}\int_0^{\lambda_0} e^{-\lambda(t+R-|x|)}\lambda^{q-(n-1)/2}
                      \: d\lambda\\
            & \leq &  D_3\langle |x| \rangle^{-(n-1)/2}\langle t-|x|\rangle^{(n-3)/2-q}.
\end{eqnarray*}
Clearly, both results are included into $\eta_q (x,t,t) \leq B_2 \langle t \rangle^{-(n-1)/2}\langle t-|x|\rangle^{(n-3)/2-q}.$
\end{proof}

\section{Nonlinear integral inequality}
We will average the weak solution $u$ of problem (\ref{ivp}) with respect to suitable test functions from
Section 3. In all cases, we take $q>-1$ and consider
\begin{equation}
\label{def:F}
F(t)=\int_{\R^n} u(x,t) \eta_{q}(x,t,t)dx.
\end{equation}
This functional satisfies a nonlinear integral inequality whenever $u$ is an energy space solution:
$(u,u_t)\in C([0,T_\eps), H^1(\Real^n)\times L^2(\Real^n))$ and $\forall \phi\in C_0^\infty(\Real^n\times [0,T_\eps))$
\begin{eqnarray}
\nonumber
& & \int u_s(x,t)\phi(x,t) dx-\int u_s(x,0)\phi(x,0) dx\\
\label{43}
& & -\int_0^t \int (u_s(x,s)\phi_{s}(x,s)- g(x)\nabla u(x,s)\cdot\nabla \phi(x,s)) dxds \\
\nonumber
& & =\int_0^\infty \int |u(x,s)|^p \phi(x,s) dxds,
\end{eqnarray}
for $t\in (0,T_\eps).$ We can actually use $\phi\in C^\infty(\Real^n\times [0,T_\eps))$ in the next result, since
$u(\cdot,s)$ is compactly supported for every $s$.
\begin{prop}
\label{prop:identity}
Let the assumptions in Theorem \ref{thm:main} be fulfilled and $q>-1.$
\begin{equation}
\label{final-equal}
\begin{array}{lll}
\d \int_{\R^n}u(x,t) \eta_{q}(x,t,t) dx\!\!\!& = &\!\!\!
\d \e\int_{\R^n}\!\!u_0(x)\xi_{q}(x,t)\: dx+ \e t\int_{\R^n}\!\!u_1(x) \eta_{q}(x,t,0) dx\\
& &\d +\int_0^t(t-s) \int_{\R^n}|u(x,s)|^p \eta_{q}(x,t,s) dxds
\end{array}
\end{equation}
for all $t\in (0,T_\eps),$ where $\xi_q$ and $\eta_q$ are defined in (\ref{aq}) and (\ref{bq}).
\end{prop}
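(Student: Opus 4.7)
The plan is to produce a smooth function $\Phi(x,s)$ on $\R^n\times [0,t]$ that solves the linear wave equation $(\partial_s^2-\Delta_g)\Phi=0$ and whose Cauchy data at $s=0$ and $s=t$ generate the three terms on the right-hand side of (\ref{final-equal}). Reading that right-hand side backwards, the natural candidate is
\begin{equation*}
\Phi(x,s):=(t-s)\,\eta_q(x,t,s)=\int_{0}^{\lambda_0}e^{-\lambda(t+R)}\,\frac{\sinh\lambda(t-s)}{\lambda}\,\varphi_\lambda(x)\,\lambda^q\,d\lambda.
\end{equation*}
Because $\sinh(z)/z$ is entire, $\Phi$ is jointly smooth in $(x,s)$; differentiating under the integral sign and using $\Delta_g\varphi_\lambda=\lambda^2\varphi_\lambda$ from Lemma~\ref{lem1} yields $(\partial_s^2-\Delta_g)\Phi=0$. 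A direct evaluation gives $\Phi(x,t)=0$ and $\Phi(x,0)=t\,\eta_q(x,t,0)$, and from $\partial_s\Phi(x,s)=-\int_{0}^{\lambda_0}e^{-\lambda(t+R)}\cosh\lambda(t-s)\,\varphi_\lambda(x)\,\lambda^q\,d\lambda$ one reads off $\partial_s\Phi(x,0)=-\xi_q(x,t)$ and $\partial_s\Phi(x,t)=-\eta_q(x,t,t)$.

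I then use $\phi=\Phi$ as a test function in (\ref{43}); this is permitted because $u(\cdot,s)$ is supported in $\{|x|\le s+R\}$ by (\ref{support}) and $\Phi\in C^\infty$, as the authors note just before the proposition. Performing one additional integration by parts in $s$ on the $u_s\Phi_s$ term and one spatial integration by parts on the $g\nabla u\cdot\nabla\Phi$ term transfers all derivatives onto $\Phi$; the resulting bulk integral is proportional to $\int_0^t\!\!\int u(\Phi_{ss}-\Delta_g\Phi)\,dx\,ds$, which vanishes by the wave equation for $\Phi$. The identity (\ref{43}) therefore reduces to pure boundary terms in $s$: substituting the four evaluations of $\Phi$ and $\partial_s\Phi$ listed above, together with $u(\cdot,0)=\e u_0$ and $u_s(\cdot,0)=\e u_1$, and rewriting the nonlinear source $\int_0^t\!\!\int|u|^p\Phi\,dx\,ds$ as $\int_0^t(t-s)\!\int|u|^p\eta_q(x,t,s)\,dx\,ds$, one obtains exactly (\ref{final-equal}) after rearrangement.

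The only delicate step is justifying the extra time integration by parts at the $H^1\times L^2$ energy level. Since $\Phi$ and $\partial_s\Phi$ are smooth and bounded on the compact set $\{(x,s):|x|\le s+R,\ 0\le s\le t\}$ containing the support of $u$, this is routine: approximate $u$ in $C([0,t];H^1)\cap C^1([0,t];L^2)$ by smooth functions, apply the identity term by term, and pass to the limit. No quantitative estimate of $\Phi$ beyond its explicit form is required, so the entire statement is a one-line consequence of the wave equation satisfied by $\Phi$.
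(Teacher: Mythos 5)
Your proof is correct and follows essentially the same approach as the paper: the authors also use the test function $\phi(x,s)=\varphi_\lambda(x)\lambda^{-1}\sinh\lambda(t-s)$ in the weak formulation, the only organizational difference being that they derive the identity for each fixed $\lambda$ and then multiply by $\lambda^q e^{-\lambda(t+R)}$ and integrate over $[0,\lambda_0]$, whereas you perform that $\lambda$-integration up front to build $\Phi=(t-s)\eta_q$ and plug it in once. Both arguments hinge on the same observation that the test function solves the homogeneous wave equation with boundary data at $s=0,t$ that reproduce the three terms of the identity.
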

\begin{proof} It is convenient to integrate by parts in (\ref{43}) and obtain that
\begin{eqnarray*}
\nonumber
& & \int (u_s(x,t)\phi(x,t)-u(x,t)\phi_s(x,t))dx-\int (u_s(x,0)\phi(x,0)-u(x,0)\phi_s(x,0))dx\\
\nonumber
& & +\int_0^t \int u(x,s)(\phi_{ss}(x,s)- \Delta_g\phi(x,s))dxds \\
\nonumber
& & =\int_0^\infty \int |u(x,s)|^p \phi(x,s) dxds.
\end{eqnarray*}
We choose $\phi(x,s)=\vp_\lambda(x)\lambda^{-1}\sinh \lambda(t-s)$, which solves $\phi_{ss}(x,s)- \Delta_g\phi(x,s)=0$,
and use the initial conditions in (\ref{ivp}) to derive
\begin{eqnarray*}
\d\int_{\R^n}u(x,t)\varphi_{\lambda}(x) dx & = &\e \cosh (\lambda t)\int_{\R^n}u_0(x)\varphi_{\lambda}(x) dx\\
 & & +\d\e\: \frac{\sinh(\lambda t)}{\lambda}\int_{\R^n}u_1(x)\varphi_{\lambda}(x)dx\\
& & +\int_{0}^{t}\frac{\sinh(\lambda (t-s))}{\lambda}\left(\int_{\R^n}|u(x,s)|^p\varphi_{\lambda}(x) dx\right)ds.
\end{eqnarray*}
The desired identity follows, if we multiply through by $\lambda^q e^{-\lambda(t+R)}$, integrate on $[0,\lambda_0]$ and interchange the order of integration between $\lambda$ and  $x$.
Recalling that $\xi_q$ and $\eta_q$ are defined by (\ref{aq}) and (\ref{bq}), respectively, we complete the proof.
\end{proof}

From now on, we use $C$ to denote positive constants depending only on $n$, $p$, $q$ and $R$, which
may change from line to line. The following proposition is the frame of our iteration argument which
shows the finite time blow-up of $u$ and yields an asymptotically sharp estimate of $T_\eps$ as $\eps\rightarrow 0.$
\begin{prop}
\label{prop:frame}
Suppose that the assumptions in Theorem \ref{thm:main} are fulfilled and choose
$q=(n-1)/2-1/p.$ If $F(t)$ is defined in (\ref{def:F}), there exists a positive constant $C=C(n,p,R)$, such that
\begin{equation}
\label{frame}
F(t)  \geq \frac{C}{\langle t\rangle}
\int_0^t \frac{t-s}{\langle s\rangle}\frac{F(s)^p}{
(\log \langle s\rangle)^{p-1}}\: ds
\end{equation}
for all $t\in (0,T_\eps).$
\end{prop}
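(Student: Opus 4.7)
\medskip

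\textbf{Proof plan for Proposition \ref{prop:frame}.} I would start from the identity (\ref{final-equal}) in Proposition~\ref{prop:identity}. Since $u_0, u_1\ge 0$ and Lemma~\ref{lem2}(i) guarantees $\xi_q(x,t), \eta_q(x,t,0)\geq 0$, the first two terms on the right in (\ref{final-equal}) are nonnegative, so they can be dropped to produce the clean lower bound
\[
F(t) \;\geq\; \int_0^t (t-s)\int_{\R^n} |u(x,s)|^p\,\eta_q(x,t,s)\,dx\,ds.
\]
Next, because $u(\cdot,s)$ is supported in $\{|x|\leq s+R\}$ by (\ref{support}), I can apply the pointwise lower bound from Lemma~\ref{lem2}(ii), namely $\eta_q(x,t,s)\geq B_1 \langle t\rangle^{-1}\langle s\rangle^{-q}$ on that region, to extract the $t,s$-dependence and reduce matters to the spatial integral of $|u|^p$:
\[
F(t) \;\geq\; \frac{B_1}{\langle t\rangle}\int_0^t \frac{t-s}{\langle s\rangle^{q}} \int_{|x|\le s+R}\! |u(x,s)|^p\,dx\,ds.
\]

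The heart of the argument is to re-insert $F(s)$ on the right via H\"{o}lder's inequality applied to the definition of $F$. With $p'=p/(p-1)$ and using that the support of $u(\cdot,s)$ lies in $\{|x|\le s+R\}$,
\[
F(s) \;\leq\; \left(\int_{|x|\le s+R}\! |u(x,s)|^p\,dx\right)^{1/p}\!
\left(\int_{|x|\le s+R}\! \eta_q(x,s,s)^{p'}\,dx\right)^{1/p'},
\]
so that $\int |u|^p dx\geq F(s)^p\big(\int \eta_q(x,s,s)^{p'}dx\big)^{-(p-1)}$. The main step, and the one I expect to be delicate, is to control the ``dual norm'' $\int_{|x|\le s+R}\eta_q(x,s,s)^{p'}dx$ using Lemma~\ref{lem2}(iii). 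The choice $q=(n-1)/2-1/p$ is tailored so that $(n-3)/2-q=-(p-1)/p$, and since $(p-1)p'/p=1$ exactly, the bound of Lemma~\ref{lem2}(iii) raised to the $p'$ power yields precisely a logarithmic singularity:
\[
\int_{|x|\le s+R}\eta_q(x,s,s)^{p'}dx \;\leq\; C\,\langle s\rangle^{-(n-1)p'/2}\int_0^{s+R}\frac{r^{n-1}}{\langle s-r\rangle}\,dr
\;\leq\; C\,\langle s\rangle^{(n-1)(p-2)/(2(p-1))}\log\langle s\rangle,
\]
the last step by splitting the $r$-integral into $r\le s/2$ and $r\ge s/2$ and noting that the first piece is $O(\langle s\rangle^{n-1})$ while the second is $O(\langle s\rangle^{n-1}\log\langle s\rangle)$.

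To close the inequality with the advertised power $\langle s\rangle^{-1}$ (and no further powers of $\langle s\rangle$), I would verify algebraically that at the critical exponent $p=p_0(n)$, i.e.\ $(n-1)p^2-(n+1)p-2=0$, one has the identity
\[
\frac{(n-1)(p-2)}{2(p-1)} \;=\; \frac{1-q}{p-1}, \qquad q=\tfrac{n-1}{2}-\tfrac{1}{p},
\]
which is equivalent to $\gamma(p,n)=0$. Thus $\big(\int \eta_q^{p'}dx\big)^{p-1}\leq C\langle s\rangle^{1-q}(\log\langle s\rangle)^{p-1}$, and substituting into the previous lower bound,
\[
F(t) \;\geq\; \frac{C}{\langle t\rangle}\int_0^t \frac{t-s}{\langle s\rangle^{q}}\cdot \frac{F(s)^p}{\langle s\rangle^{1-q}(\log\langle s\rangle)^{p-1}}\,ds
\;=\; \frac{C}{\langle t\rangle}\int_0^t \frac{(t-s)F(s)^p}{\langle s\rangle(\log\langle s\rangle)^{p-1}}\,ds,
\]
which is (\ref{frame}). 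The main obstacle is the spatial integral of $\eta_q(x,s,s)^{p'}$: both the pointwise bound in Lemma~\ref{lem2}(iii) and the algebraic identification of the exponent with $p=p_0(n)$ must be handled exactly, since any slack would either destroy the critical scaling or introduce the wrong power of $\log\langle s\rangle$.
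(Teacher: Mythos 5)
Your proposal is correct and essentially reproduces the paper's argument: you use Proposition~\ref{prop:identity}, Lemma~\ref{lem2}(ii)--(iii), and the relation $(n-1)(p-1)/2-1/p=1$ at $p=p_0(n)$ exactly as the paper does. The only (cosmetic) reorganization is that you first insert the lower bound (ii) for $\eta_q(x,t,s)$ and then apply an unweighted H\"older inequality to $F(s)$, whereas the paper applies a weighted H\"older inequality with weight $\eta_q(x,t,s)$ directly to $F(s)=\int u\,\eta_q(x,s,s)\,dx$ and inserts (ii) afterward; once (ii) is substituted the two computations coincide line by line.
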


\begin{proof}
Let $0\leq s<t$. From $F(s)=\int_{\R^n} u(x,s) \eta_{q}(x,s,s)dx$ and H\"{o}lder's inequality,
\begin{equation}
\label{Holder:F1}
\begin{array}{lll}
\d |F(s)|\leq \left(\int |u(x,s)|^p \eta_{q}(x,t,s) dx \right)^{1/p} \\
\d\qquad \times \left( \int_{|x|\leq s+R} \frac{\{\eta_{q}(x,s,s)\}^{p/(p-1)}}
{\{\eta_{q}(x,t,s)\}^{1/(p-1)}}
dx\right)^{(p-1)/p}.
\end{array}
\end{equation}
Substituting estimates $(ii)$ and $(iii)$ from Lemma~\ref{lem2} with $q=(n-1)/2-1/p$,
we can bound the second integral by
\[
C\int_{|x|\leq s+R} \frac{\langle s\rangle ^{-(n-1)p/2(p-1)} \langle s-|x| \rangle ^{\{(n-3)/2-q\}p/(p-1)}}
{\langle t\rangle ^{-1/(p-1)}\langle s \rangle ^{-q/(p-1)}}dx.
\]
This expression simplifies to
\begin{eqnarray*}
C\langle t\rangle ^{1/(p-1)}\langle s \rangle ^{q/(p-1)-(n-1)p/2(p-1)}
\int_{|x|\leq s+R} \langle s-|x| \rangle ^{\{(n-3)/2-q\}p/(p-1)} dx.
\end{eqnarray*}
The latter integral is actually
\[
\int_{|x|\leq s+R} \langle s-|x| \rangle ^{-1} dx\leq C\langle s\rangle^{n-1}\log \langle s\rangle,
\]
so the final estimate of the second integral in (\ref{Holder:F1}) becomes
\[
C\langle t\rangle ^{1/(p-1)}
 \langle s\rangle ^{(n-1)/2-1/p(p-1)}\log\langle s\rangle.
\]
From Lemma~\ref{lem2} and Proposition~\ref{prop:identity}, we see that $F(t)\geq 0.$ Thus, (\ref{Holder:F1}) gives
\[
F(s)^p\leq C\langle t\rangle  \langle s \rangle^{(n-1)(p-1)/2-1/p} (\log\langle s\rangle)^{p-1}
\left(\int |u(x,t)|^p \eta_{q}(x,t,s) dx\right).
\]
Combining Proposition~\ref{prop:identity} with estimates $(i)$ in Lemma \ref{lem2}, we have that
\begin{equation}
\label{frame-sub}
F(t)  \geq   C_1(u_0)\e+C_2(u_1)\e t\langle t\rangle^{-1}
 +\frac{C}{\langle t\rangle}\int_0^t\frac{ (t-s)F(s)^p\: ds}{ \langle s\rangle ^{(n-1)(p-1)/2-1/p}
(\log\langle s\rangle)^{p-1}}.
\end{equation}
Since $p=p_0(n)$ is equivalent to
\[
\frac{n-1}{2}p-\frac{n-1}{2}-\frac{1}{p}=1,
\]
inequality (\ref{frame-sub}) implies (\ref{frame}). The proof is complete.
\end{proof}

\section{Iteration argument}

First of all, we shall get the first step of the iteration argument. To obtain estimate (\ref{1ststep}), we use the
following lower bound of the $L^p$ norm of $u$.
\begin{lem}
\label{lem:lp_lower}
Suppose that the assumptions in Theorem \ref{thm:main} are fulfilled.
Then, there exists a positive constant $C_0=C_0(u_0,u_1,n,p,R)$ such that
\begin{equation}
\label{lp_lower}
\int_{\Real^n}|u(t,x)|^pdx\ge C_0 \e^p \langle t\rangle ^{n-1-(n-1)p/2}.
\end{equation}
holds for $t\ge0$.
\end{lem}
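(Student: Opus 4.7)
The plan is to apply Hölder's inequality to the functional $\int u(x,t)\eta_{q'}(x,t,t)\,dx$ for a carefully chosen exponent $q'$, and then to extract a lower bound on $\|u(\cdot,t)\|_{L^p}$ by combining Hölder with the identity in Proposition~\ref{prop:identity}. The crucial point is to take $q'$ strictly larger than the critical value $(n-1)/2-1/p$ used in Proposition~\ref{prop:frame}; this removes the $\log\langle t\rangle$ factor that would otherwise enter through $\int\langle t-|x|\rangle^{-1}\,dx$ and produces the clean polynomial rate in (\ref{lp_lower}).

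First I would fix some $q'>(n-1)/2-1/p$; note that $p=p_0(n)$ with $n\ge 2$ forces $(n-1)/2-1/p>0$, so $q'>0$ is automatic and both parts (i) and (iii) of Lemma~\ref{lem2} apply. Setting $F_{q'}(t)=\int u(x,t)\eta_{q'}(x,t,t)\,dx$, I would invoke Proposition~\ref{prop:identity} together with the nonnegativity of $u_0$, $u_1$, $|u|^p$ and of the kernels $\xi_{q'},\eta_{q'}$ to discard the $u_1$ and nonlinear terms; Lemma~\ref{lem2}(i) then gives
\[
F_{q'}(t)\ge \e\int_{\R^n}u_0(x)\,\xi_{q'}(x,t)\,dx\ge A_0\,\e\,\|u_0\|_{L^1(\R^n)}>0.
\]

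Next I would bound $F_{q'}(t)$ above by Hölder's inequality, using that $u(\cdot,t)$ is supported in $\{|x|\le t+R\}$:
\[
F_{q'}(t)\le \|u(\cdot,t)\|_{L^p(\R^n)}\Bigl(\int_{|x|\le t+R}\eta_{q'}(x,t,t)^{p'}\,dx\Bigr)^{1/p'},\qquad p'=\tfrac{p}{p-1}.
\]
The upper bound on $\eta_{q'}$ in Lemma~\ref{lem2}(iii) reduces the spatial integral to $\int_{|x|\le t+R}\langle t-|x|\rangle^{((n-3)/2-q')p'}\,dx$; since the choice $q'>(n-1)/2-1/p$ is exactly equivalent to $((n-3)/2-q')p'<-1$, this integral is $O(\langle t\rangle^{n-1})$ with no logarithm, yielding a prefactor of order $\langle t\rangle^{(n-1)/p'-(n-1)/2}$. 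Combining the lower and upper estimates on $F_{q'}(t)$ and raising to the $p$-th power gives $\|u(\cdot,t)\|_{L^p}^p\ge C\e^p\,\langle t\rangle^{(n-1)(1-p/2)}$, which is exactly the exponent $n-1-(n-1)p/2$ in (\ref{lp_lower}).

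No step is essentially hard. The only delicate point, which should be emphasized in the write-up, is the choice of $q'$ strictly above the Strauss-critical value: using the same $q=(n-1)/2-1/p$ as in Proposition~\ref{prop:frame} would introduce the unwanted $\log\langle t\rangle$ factor, and the sharp rate in (\ref{lp_lower}) really relies on the finer decay of $\eta_{q'}$ inside the light cone for $q'$ slightly larger. Everything else is positivity, Hölder's inequality, and routine pointwise size estimates already supplied by Lemmas~\ref{lem1} and~\ref{lem2}.
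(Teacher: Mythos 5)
Your argument is correct and takes essentially the same route as the paper: establish $F_{q'}(t)\ge C\e$ from Proposition~\ref{prop:identity} together with positivity and Lemma~\ref{lem2}(i), then bound $F_{q'}(t)$ above by $\|u(\cdot,t)\|_{L^p}$ times the $L^{p'}$-norm of $\eta_{q'}(\cdot,t,t)$ via H\"older and Lemma~\ref{lem2}(iii). Your insistence on choosing $q'$ \emph{strictly} above $(n-1)/2-1/p$ is not just pedantry, and in fact it tidies up a small wrinkle in the paper's version: the paper invokes (\ref{frame-sub}), whose $F$ was built with $q=(n-1)/2-1/p$, yet immediately assumes $q>(n-3)/2+1/p'$; a short computation shows $(n-3)/2+1/p'=(n-1)/2-1/p$ exactly, so with the critical $q$ the exponent $((n-3)/2-q)p'$ equals $-1$ and the integral $\int_{|x|\le t+R}\langle t-|x|\rangle^{-1}dx$ would produce a $\log\langle t\rangle$ that the stated bound (\ref{lp_lower}) does not allow. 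Your fix — deriving the lower bound $F_{q'}(t)\ge A_0\e\|u_0\|_{L^1}$ directly from Proposition~\ref{prop:identity} and Lemma~\ref{lem2}(i) for a fresh $q'>(n-1)/2-1/p$, rather than through Proposition~\ref{prop:frame} — is the right reading of the argument and recovers the clean polynomial rate with no spurious logarithm.
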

\begin{proof}
Making use of (\ref{frame-sub}) and H\"{o}lder's inequality, we get
\begin{equation}
\label{Holder_u^p}
C_1(u_0,u_1)\e \le |F(t)| \le \left(\int_{\R^n}|u(x,t)|^pdx\right)^{1/p}
\cdot (I(t))^{1/p'},
\end{equation}
where we set
\[
I(t)=\int_{|x|\le t+R} \{\eta_{q}(x,t,t)\}^{p'}dx,
\]
and $p'=p/(p-1)$.
It follows from the estimates (iii) in Lemma \ref{lem2} with $q>(n-3)/2+1/p'$ that
\begin{eqnarray*}
I(t)&\le& C\langle t\rangle^{-(n-1)p'/2} \int_{|x|\le t+R}\langle t-|x|\rangle^{(n-3)p'/2-p'q}dx
\\ &=& C\langle t\rangle^{-(n-1)p'/2}\int_{0}^{t+R} r^{n-1} \langle t-r\rangle^{(n-3)p'/2-p'q} dr
\end{eqnarray*}
Changing the variables by $t-r=\rho$, we have
\begin{eqnarray*}
I(t)&\le& C\langle t\rangle^{-(n-1)p'/2} \int_{-R}^{t}(t-\rho)^{n-1}(3R+|\rho|)^{(n-3)p'/2-p'q}d\rho\\
&=& C\langle t\rangle^{-(n-1)p'/2}\{I_1(t)+I_2(t)\},
\end{eqnarray*}
where we set
\[
I_1(t)=\int_{-R}^{t/2} (t-\rho)^{n-1} (3R+|\rho|)^{(n-3)p'/2-p'q} d\rho
\]
and
\[
I_2(t)=\int_{t/2}^{t} (t-\rho)^{n-1} (3R+\rho)^{(n-3)p'/2-p'q} d\rho.
\]
Since $(n-3)p'/2-p'q+1<0$, integration by parts yields that
\begin{eqnarray*}
I_2(t)&\le& C\langle t\rangle^{n-1} (3R+t)^{(n-3)p'/2-p'q+1} \\
&&\qquad-C\int_{t/2}^{t} (t-\rho)^{n-2} (3R+\rho)^{(n-3)p'/2-p'q+1}d\rho\\
&\le& C\langle t\rangle^{n-1+(n-3)p'/2-p'q+1}.
\end{eqnarray*}
Similarly, we have
\begin{eqnarray*}
I_1(t)&\le& C(t+R)^{n-1}-
\int_{-R}^{t/2}(t-\rho)^{n-2}(3R+\rho)^{(n-3)p'/2-p'q+1}d\rho\\
&\le& C(t+R)^{n-1}\le C\langle t\rangle^{n-1}.
\end{eqnarray*}
Therefore, we get
\[
I(t)\le C\langle t\rangle^{n-1-(n-1)p'/2}
\]
which implies (\ref{lp_lower}) by (\ref{Holder_u^p}).
\end{proof}

In the following, we start our iteration argument by using the
\lq\lq slicing method" in \cite{AKT00}.
Let us show the first step of the iteration argument.
\begin{lem}
\label{lem:first-it}
Suppose that the assumptions in Theorem \ref{thm:main} are fulfilled.
Then, $\d F(t)=\int_{\R^n} u(x,t) \eta_{q}(x,t,t)dx$ for $t\ge 3/2$ satisfies that
\begin{equation}
\label{1ststep}
F(t)\ge M\e^p \log \{t/(3/2)\},
\end{equation}
where $M=C_0B_1/3^3$ and $C_0$ is the one in Lemma \ref{lem:lp_lower}.
\end{lem}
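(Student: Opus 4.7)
The plan is to combine the identity of Proposition~\ref{prop:identity} with the pointwise lower bound $(ii)$ of Lemma~\ref{lem2} and the spatial $L^p$ lower bound of Lemma~\ref{lem:lp_lower}, and then carefully evaluate the resulting time integral. Since $u_0,u_1\ge 0$ do not vanish identically and $\xi_q,\eta_q(\cdot,t,0)\ge 0$ by Lemma~\ref{lem2}~$(i)$, the first two terms on the right-hand side of (\ref{final-equal}) are nonnegative and may be dropped, yielding
\[
F(t)\ \ge\ \int_0^t (t-s)\int_{\R^n} |u(x,s)|^p \eta_{q}(x,t,s)\,dx\,ds.
\]
The support condition (\ref{support}) ensures $u(\cdot,s)$ is supported in $|x|\le s+R$, so Lemma~\ref{lem2}~$(ii)$ applies pointwise on this support and yields $\eta_q(x,t,s)\ge B_1\langle t\rangle^{-1}\langle s\rangle^{-q}$. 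Pulling this weight out of the spatial integral gives
\[
F(t)\ \ge\ B_1 \langle t\rangle^{-1}\int_0^t (t-s)\langle s\rangle^{-q}\left(\int_{\R^n}|u(x,s)|^p dx\right)ds.
\]

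Next, I would insert the bound from Lemma~\ref{lem:lp_lower}. With $q=(n-1)/2-1/p$, the total exponent on $\langle s\rangle$ becomes
\[
-q+(n-1)-\frac{(n-1)p}{2} \;=\; -1,
\]
which is exactly the critical-exponent identity $(n-1)p/2=1+(n-1)/2+1/p$ used already in the proof of Proposition~\ref{prop:frame}. This is the key algebraic step: the two polynomial weights combine to produce the critical $\langle s\rangle^{-1}$ factor, and we arrive at
\[
F(t)\ \ge\ C_0 B_1 \e^p\,\langle t\rangle^{-1}\int_0^t \frac{t-s}{\langle s\rangle}\,ds.
\]

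It remains to estimate this elementary time integral from below for $t\ge 3/2$ by something proportional to $\log\{t/(3/2)\}$, matching the constant $M=C_0B_1/3^3$ claimed in (\ref{1ststep}). Following the slicing idea of \cite{AKT00}, I would restrict the integration to $s\in[3/2,t]$ where $\langle s\rangle=3+s\le 3s$, and use $\langle t\rangle=3+t\le 3t$ for $t\ge 3/2$, and bound $(t-s)\ge $ a definite fraction of $t$ on a suitable subinterval so that $(t-s)/\langle t\rangle$ contributes another factor $1/3$. Combined with $\int_{3/2}^{t}ds/s = \log\{t/(3/2)\}$, this produces exactly three factors of $1/3$ in the constant.

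The main obstacle is bookkeeping: verifying that the critical-exponent cancellation is clean (which it is, by the same identity as in Proposition~\ref{prop:frame}) and that the lower bound of the elementary integral can be arranged to give precisely $(1/27)\log\{t/(3/2)\}$ valid down to $t=3/2$, rather than only asymptotically as $t\to\infty$. Everything else is a direct substitution.
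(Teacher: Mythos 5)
Your derivation matches the paper's up to the intermediate bound
\[
F(t)\ \ge\ C_0 B_1\, \e^p\,\langle t\rangle^{-1}\int_0^t \frac{t-s}{\langle s\rangle}\,ds ,
\]
obtained from Proposition~\ref{prop:identity}, Lemma~\ref{lem2}~(ii) on $\mathrm{supp}\,u(\cdot,s)\subset\{|x|\le s+R\}$, and Lemma~\ref{lem:lp_lower}, with the exponent on $\langle s\rangle$ collapsing to $-1$ by the critical identity --- all of this is exactly the paper's route.

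The gap is in your treatment of the remaining elementary integral, and it is a real one, not mere bookkeeping. You propose simultaneously to restrict $s$ to a subinterval on which $t-s$ is bounded below by a definite fraction of $t$, and to keep the full $\int_{3/2}^{t}ds/s=\log\{t/(3/2)\}$. These two moves are incompatible: on the whole interval $[3/2,t]$ the factor $t-s$ vanishes as $s\to t$ and admits no uniform lower bound proportional to $t$; while if you cut to $s\in[3/2,ct]$ with $c<1$ so that $t-s\ge(1-c)t$, the resulting $\int_{3/2}^{ct}ds/s=\log(2ct/3)$ is strictly smaller than the target $\log(2t/3)$ and becomes negative for $t$ near $3/2$, so the bound cannot hold down to $t=3/2$. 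Concretely, the naive estimate
\[
\frac{1}{t}\int_{3/2}^{t}\frac{t-s}{s}\,ds \;=\; \log\!\Big(\frac{2t}{3}\Big)-1+\frac{3}{2t}
\]
is $O((t-3/2)^2)$ as $t\to 3/2^{+}$, whereas $\log(2t/3)$ is $O(t-3/2)$, so no constant multiple works uniformly; at $t=3$ the left side is $\approx 0.19$ while $\tfrac13\log 2\approx 0.23$.

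The missing ingredient is the integration by parts in $s$ used in the paper: since
\[
\int_{a}^{t}\frac{t-s}{s}\,ds=\int_{a}^{t}\log\!\Big(\frac{s}{a}\Big)\,ds,
\]
one first passes to the right-hand side and only then restricts to $s\in[2t/3,t]$, where the integrand is bounded below by $\log(2t/(3a))$ and the interval length contributes the factor $t/3$. This yields
$\langle t\rangle^{-1}\int \ge \frac{1}{3t}\cdot\frac{1}{3}\cdot\frac{t}{3}\log(2t/3)=\tfrac{1}{27}\log\{t/(3/2)\}$,
valid for all $t\ge 3/2$ (both sides vanish at $t=3/2$). Once you insert this integration by parts --- which you in fact do use later, in the inductive step of Proposition~\ref{prop:j-step} --- the rest of your argument is sound and coincides with the paper's.
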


\begin{proof}

Putting the estimates (\ref{lp_lower}) and (ii) with $q=(n-1)/2-1/p>0$ in Lemma \ref{lem2} into (\ref{final-equal}), we
get
\[
F(t)\ge \frac{C_0B_1\e^p}{\langle t\rangle}\int_{0}^{t}\frac{t-s}
{\langle s\rangle^{q+(n-1)p/2-(n-1)}}ds.
\]
Let $t\ge 3/2$. It follows from
\[
q+\frac{(n-1)p}{2}-(n-1)=\frac{(n-1)p}{2}-\frac{(n-1)}{2}-\frac{1}{p}=1.
\]
that
\begin{eqnarray*}
F(t)&\ge&\frac{C_0B_1\e^p}{3^2 t}\int_{1}^{t}\frac{t-s}{s}ds
\ge \frac{C_0B_1\e^p}{3^2 t}\int_{2t/3}^{t}\log s ds\\
&\ge&\frac{C_0B_1\e^p}{3^3} \log (2t/3)
\end{eqnarray*}
for $t\ge 3/2$. The proof is complete.
\end{proof}

The next step is to derive the following estimates.

\begin{prop}
\label{prop:j-step}
Suppose that the assumptions in Theorem \ref{thm:main} are fulfilled.
Then, $\d F(t)=\int_{\R^n} u(x,t) \eta_{q}(x,t,t)dx$ for $t\ge l_j $ $(j\in \N)$ satisfies that
\begin{equation}
\label{j-step}
F(t)\ge C_j (\log \langle t\rangle)^{-b_j}\left\{ \log \left(t/l_j \right)\right\}^{a_j},
\end{equation}
where $\d l_j=l_0+\sum_{k=1}^{j}2^{-(k+1)}=2-2^{-(j+1)}$ $(j\in\N)$ with $l_0=3/2$.
Here, $a_j$, $b_j$ and $C_j$ are defined by
\begin{equation}
\label{a_j,b_j}
a_j=\frac{p^{j+1}-1}{p-1}\quad \mbox{and}\quad b_j=p^{j}-1,
\end{equation}
\begin{equation}
\label{ind_C_j}
C_j=\exp\{p^{j-1}(\log(C_1(2p)^{-S_j}E^{1/(p-1)})-\log E^{1/(p-1)})\}\quad (j\ge 2),
\end{equation}
\begin{equation}
\label{C_1}
C_1=N\e^{p^2},
\end{equation}
where $C$ is the one in (\ref{frame}) and
\begin{equation}
\label{S_j,E}
N=\frac{C M^p}{3^27(p+1)},\quad S_j=\sum_{i=1}^{j-1}\frac{i}{p^i},\quad E=\frac{C(p-1)}{2^33^2p^2}.
\end{equation}
\end{prop}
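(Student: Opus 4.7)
The plan is to prove Proposition \ref{prop:j-step} by induction on $j$, using Lemma \ref{lem:first-it} as the base case and the nonlinear integral inequality (\ref{frame}) of Proposition \ref{prop:frame} as the engine that converts the bound at level $j$ into the bound at level $j+1$. For the base case $j=1$, I would substitute $F(s) \geq M\e^p \log(s/l_0)$ from Lemma \ref{lem:first-it} into (\ref{frame}), restrict the $s$-integration to $[l_1, t]$ (which is legitimate since $l_1 > l_0$), and carry out a direct computation producing $F(t) \geq C_1 (\log\langle t\rangle)^{-(p-1)}(\log(t/l_1))^{p+1}$ with $C_1 = N\e^{p^2}$ exactly as claimed.

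For the inductive step, inserting (\ref{j-step}) at level $j$ into (\ref{frame}) yields
\[
F(t) \geq \frac{C C_j^p}{\langle t\rangle}\int_{l_j}^t \frac{t-s}{\langle s\rangle (\log \langle s\rangle)^{pb_j + p - 1}}(\log(s/l_j))^{pa_j}\, ds.
\]
The slicing method enters by restricting to $s \in [l_{j+1}, t]$, where $\log(s/l_j) \geq \log(s/l_{j+1})$ (using $l_j < l_{j+1}$) and, by monotonicity, $(\log\langle s\rangle)^{-(pb_j + p - 1)} \geq (\log\langle t\rangle)^{-b_{j+1}}$ once the identity $pb_j + (p-1) = b_{j+1}$ is noted. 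Pulling out the $(\log\langle t\rangle)^{-b_{j+1}}$ factor, the remaining integral becomes $\int_0^T (t - l_{j+1} e^u) u^{pa_j}\, du$ after the substitution $u = \log(s/l_{j+1})$, with $T = \log(t/l_{j+1})$. One integration by parts extracts the leading term $tT^{pa_j + 1}/(pa_j + 1)$, and the identity $pa_j + 1 = a_{j+1}$ provides the correct exponent on $\log(t/l_{j+1})$. Tracking the multiplicative constants produces a recurrence of the shape $C_{j+1} = (2p)^{-j} C_j^p$, whose solution is exactly the closed form in (\ref{ind_C_j}); the auxiliary constant $E$ from (\ref{S_j,E}) plays the role of a rescaling that lets the iterated logarithms telescope cleanly in the limit $j \to \infty$, which will be taken in Section~5 to extract the lifespan bound.

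The main obstacle will be the bookkeeping around the integration by parts: the bound $l_{j+1}\int_0^T e^u u^{pa_j}\, du \leq tT^{pa_j}$ is sharp, so the leading term $tT^{pa_j+1}/(pa_j+1)$ dominates only once $T \geq 2(pa_j + 1)$, and since $pa_j$ grows geometrically this imposes a $j$-dependent lower threshold on $t$. I would handle this either by absorbing the lost factor into $C_{j+1}$ (using the slicing gap $l_{j+1} - l_j = 2^{-(j+2)}$ to supply the extra room) or by arguing that the threshold is automatically met for the range of $t$ relevant to the lifespan estimate. A second subtlety is the use of $\langle s\rangle$ versus $s$ inside the logarithms: on $[l_{j+1}, t]$ with $l_{j+1} \geq 7/4$ these are comparable up to a factor independent of $j$, but this comparison must be tracked uniformly through the induction so that no extra logarithmic loss accumulates across the countably many iterations.
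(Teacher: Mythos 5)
Your overall architecture — induction starting from Lemma~\ref{lem:first-it} and feeding each level back through the integral inequality~\eqref{frame} — is the same as the paper's, and your identities $pa_j+1=a_{j+1}$, $pb_j+(p-1)=b_{j+1}$ and the recurrence $C_{j+1}\sim (2p)^{-j}C_j^p$ all match. However, there is a genuine gap in the central step, and it is one you half-notice but do not resolve.

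The gap is in the slicing. You slice to the \emph{fixed} interval $[l_{j+1},t]$, pass to the variable $u=\log(s/l_{j+1})$, and then try to extract the leading term from $\int_0^T(t-l_{j+1}e^u)u^{pa_j}\,du$, accepting a remainder of order $tT^{pa_j}$. The leading term dominates only when $T=\log(t/l_{j+1})\gtrsim pa_j+1$, which grows geometrically in $j$. This is fatal: in Section~5 one \emph{fixes} $T$ once and for all and then sends $j\to\infty$ to force $F(T)\to\infty$, so the bound \eqref{j-step} must hold for that fixed $T$ and every $j$. Your proposed fixes do not repair this. The slicing gap $l_{j+1}-l_j=2^{-(j+2)}$ cannot supply ``extra room'' here, because for $T$ below your threshold the difference $tT^{pa_j+1}/(pa_j+1)-tT^{pa_j}$ can be zero or negative — there is no positive factor to absorb into $C_{j+1}$. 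And ``the threshold is automatically met for the relevant range of $t$'' is precisely false: the lifespan argument needs all $j$ at a single finite $t$.

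The paper avoids the remainder entirely by reversing your two operations. First it integrates by parts on the \emph{full} interval $[l_j,t]$: since $\frac{d}{ds}\bigl(\log(s/l_j)\bigr)^{pa_j+1}=(pa_j+1)\,s^{-1}\bigl(\log(s/l_j)\bigr)^{pa_j}$ and the boundary terms vanish at both $s=l_j$ and $s=t$,
\begin{equation*}
\int_{l_j}^t\frac{t-s}{s}\bigl(\log(s/l_j)\bigr)^{pa_j}\,ds
=\frac{1}{pa_j+1}\int_{l_j}^t\bigl(\log(s/l_j)\bigr)^{pa_j+1}\,ds,
\end{equation*}
an identity with no error. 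Only \emph{then} does it slice, and crucially it slices to the $t$-scaled interval $[\,l_jt/l_{j+1},\,t\,]$, not a fixed one. On that interval $\log(s/l_j)\ge\log(t/l_{j+1})$ is a constant, so the remaining integral is bounded below by $(1-l_j/l_{j+1})\,t\,\bigl(\log(t/l_{j+1})\bigr)^{pa_j+1}$, where $1-l_j/l_{j+1}\ge 2^{-(j+3)}$ is exactly the geometric factor that feeds $(2p)^{-j}$ into the recurrence. The $t$-scaled slicing is the idea your write-up is missing; with it the bound holds for all $t\ge l_{j+1}\le 2$, uniformly in $j$, and the threshold problem never arises. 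Your second worry, the $\langle s\rangle$ versus $s$ comparison, is benign: the factor $3^2$ it costs is absorbed once per step into $E$, and the resulting additive $O(j)$ term in $\log C_j$ is summable against $p^{-j}$, which is exactly what the closed form \eqref{ind_C_j} with $S_j=\sum i/p^i$ encodes.
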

\begin{proof}

Let $t\ge l_1$.
Replacing the domain of integration by $[l_0,t]$ in (\ref{frame})
and putting the estimates (\ref{1ststep}) into (\ref{frame}), we get
\begin{eqnarray*}
\d F(t)
&\ge& \frac{CM^p\e^{p^2}}{\langle t\rangle} \int_{l_0}^{t}
\frac{(t-s) \{\log(s/l_0)\}^{p}}
{\langle s\rangle(\log \langle s\rangle)^{p-1}}ds\\
\d &\ge&\frac{CM^p\e^{p^2}}{3^2t}(\log \langle t\rangle)^{-(p-1)}
\int_{l_0}^{t}\frac{(t-s)}{s}\{\log(s/l_0)\}^{p}ds
\end{eqnarray*}
for $t\ge l_{1}$.
Integration by parts yields that
\[
F(t)
\ge \frac{C M^p\e^{p^2}}{3^2(p+1)t}(\log \langle t\rangle)^{-(p-1)}
\int_{l_0}^{t}\{\log(s/l_0)\}^{p+1}ds.
\]
Replacing the domain of integration by $[l_0t/l_1,t]$, we get
\begin{eqnarray*}
F(t)&\ge&\frac{C M^p\e^{p^2}}{3^2(p+1)t}(\log \langle t\rangle)^{-(p-1)}
\int_{l_0t/l_1}^{t}\{\log(s/l_0)\}^{p+1}ds\\
&\ge& \frac{C M^p\e^{p^2}}{3^27(p+1)}(\log \langle t\rangle)^{-(p-1)}\{\log(t/l_{1})\}^{p+1}
\end{eqnarray*}
for $t\ge l_1$. Therefore, (\ref{j-step}) holds for $j=1$.

Assume that (\ref{j-step}) holds. Let $t\ge l_{j+1}$.
Replacing the domain of integration by $[l_j,t]$ in (\ref{frame}) and
putting the estimates (\ref{j-step}) into (\ref{frame}), we get
\begin{eqnarray*}
\d F(t)
&\ge& \frac{C C_j^p}{\langle t\rangle} \int_{l_j}^{t}
\frac{(t-s)(\log \langle s\rangle)^{-pb_j}\{\log(s/l_j)\}^{pa_j}}
{\langle s\rangle(\log \langle s\rangle)^{p-1}}ds\\
\d &\ge&\frac{C C_j^p}{3^2t}(\log \langle t\rangle)^{-pb_j-(p-1)}
\int_{l_j}^{t}\frac{(t-s)}{s}\{\log(s/l_j)\}^{pa_j}ds
\end{eqnarray*}
for $t\ge l_{j+1}$.
Integration by parts yields that
\[
F(t)
\ge \frac{C C_j^p}{3^2(pa_j+1)t}(\log \langle t\rangle)^{-pb_j-(p-1)}
\int_{l_j}^{t}\{\log(s/l_j)\}^{pa_j+1}ds.
\]
Replacing the domain of integration by $[l_jt/l_{j+1},t]$, we get
\begin{eqnarray*}
F(t)
&\ge& \frac{C C_j^p}{3^2(pa_j+1)t}(\log \langle t\rangle)^{-pb_j-(p-1)}
\int_{l_jt/l_{j+1}}^{t}\{\log(s/l_j)\}^{pa_j+1}ds\\
&\ge&\frac{C C_j^p(1-l_j/l_{j+1})}{3^2(pa_j+1)}(\log \langle t\rangle)^{-pb_j-(p-1)}\{\log(t/l_{j+1})\}^{pa_j+1}
\end{eqnarray*}
for $t\ge l_{j+1}$.
Noticing that $1-l_j/l_{j+1}=(l_{j+1}-l_j)/l_{j+1}\ge2^{-(j+3)}$ and the definitions of $\{a_j\}, \{b_j\}$ in
(\ref{a_j,b_j}), we obtain
\begin{eqnarray*}
F(t)&\ge&
\frac{CC_j^p}{2^{j+3}3^2(pa_j+1)}(\log \langle t\rangle)^{-b_{j+1}}\{\log(t/l_{j+1})\}^{a_{j+1}}\\
&\ge& \frac{EC_j^p}{(2p)^{j}}(\log \langle t\rangle)^{-b_{j+1}}\{\log(t/l_{j+1})\}^{a_{j+1}}
\end{eqnarray*}
for $t\ge l_{j+1}$, where $E$ is defined in (\ref{S_j,E}).
Finally, it remains to prove that $C_j$ in (\ref{ind_C_j}) satisfies
\[
C_{j+1}=\frac{EC_j^p}{(2p)^{j}}.
\]
Since Lemma 3.1 in \cite{W15}, if $C_{a,j}$, $F_{p,a}$, $E_{p,a}$
are replaced by $C_j$, $2p$, and $E$, respectively, we have this equality.
The proof is complete.
\end{proof}


\begin{proof}[End of the proof of Theorem \ref{thm:main}] 
Setting $\d S = \lim_{j\rightarrow \infty}S_j$,
we see $S_j \le S$ for all $j\in \N$.
Therefore, (\ref{ind_C_j}) yields
\begin{equation}
\label{fin-ind_C_j}
\begin{array}{llll}
C_j&\ge& \exp\{p^{j-1}(\log(C_1(2p)^{-S}E^{1/(p-1)})-\log E^{1/(p-1)})\}\\
&\ge& E^{-1/(p-1)} \exp\{p^{j-1}(\log(C_1(2p)^{-S}E^{1/(p-1)}))\}.
\end{array}
\end{equation}
Combing the estimates (\ref{fin-ind_C_j}) with (\ref{j-step}), we have
\begin{eqnarray*}
F(t)\ge
E^{-1/(p-1)} \exp\{p^{j-1}\{\log (C_1(2p)^{-S}(\log(3+t))^{-p}(\log(t/2))^{p^2/(p-1)})\}\}\\
\times\log(3+t)\{\log (t/2)\}^{-1/(p-1)}.
\end{eqnarray*}
for $t\ge 2$.
Noticing that $\log(2t)\le 2\log t$, $\log(t/2)\ge (\log t)/2 $ for $t\ge4$, we get
\[
F(t)\ge
E^{-1/(p-1)} \exp\{p^{j-1}K(t)\}\log(3+t)\{\log (t/2)\}^{-1/(p-1)},
\]
where $K(t)=\log \{B\e^{p^2}(\log t)^{p/(p-1)}\}$ with $B=N(2p)^{-S}2^{p(1-2p)/(p-1)}E^{1/(p-1)}$
for $t\ge4$.

We take $\e_0=\e_0(u_0,u_1,p,R,n)>0$ so small that
\[
\exp\{B^{-(p-1)/p}\e_0^{-p(p-1)}\} \ge 4.
\]
Next, for a fixed $\e\in(0,\e_0]$, we suppose that $T$ satisfies
\begin{equation}
\label{det:lifespan}
T>\exp \{B^{-(p-1)/p}\e ^{-p(p-1)}\}\ (\ge4).
\end{equation}
Then we have $K(T)>0$. Therefore, we get $F(T)\rightarrow \infty$ as $j\rightarrow \infty$.
Hence, (\ref{det:lifespan}) implies that $T_\e\le \exp \{B^{-(p-1)/p}\e ^{-p(p-1)}\}$ for $0<\e\le \e_0$.

\end{proof}

\newpage

\begin{center}
ACKNOWLEDGMENTS
\end{center}
The authors are grateful to Professor Hiroyuki Takamura for his useful comments.

\vskip10pt

\end{document}